\newcommand{\TT}{\mathbb{T}}
\newcommand{\RR}{\mathbb{R}}
\newcommand{\ZZ}{\mathbb{Z}}
\newcommand{\lp}{\left}
\newcommand{\rp}{\right}
\theoremstyle{plain}
\newtheorem{theorem}{Theorem}
\newtheorem{lemma}{Lemma}
\newtheorem{corollary}[theorem]{Corollary}
\newtheorem{question}{Question}
\newtheorem*{claim*}{Claim}
\theoremstyle{definition}
\newtheorem*{definition*}{Definition}
\theoremstyle{remark}
\theoremstyle{definition}
\newtheorem{example}{Example}
\newcommand{\PP}{\mathbb{P}}
\newcommand{\EE}{\mathbb{E}}
\title{Poissonian correlation of higher order differences}
\address{Yale University, 10 Hillhouse Ave, New Haven, CT 06511, USA}
\email{alex.cohen@yale.edu}
\author{Alex Cohen}
\subjclass[2010]{28E99, 42A82}
\keywords{Pair correlation, difference sequences.}
\date{November 2020}
\begin{document}

\begin{abstract} 
A sequence $(x_n)_{n=1}^{\infty}$ on the torus $\mathbb{T}$ exhibits Poissonian pair correlation if for all $s>0$,
\begin{equation*}
     \lim_{N\to\infty} \frac{1}{N}\#\left\{1\leq m\neq n \leq N : |x_m-x_n| \leq \frac{s}{N}\right\} = 2s.
 \end{equation*} 
 It is known that this condition implies equidistribution of $(x_n)$. We generalize this result to \textit{four-fold differences}: if for all $s > 0$ we have
 \begin{equation*}
     \lim_{N\to\infty} \frac{1}{N^2}\#\left\{\substack{1\leq m,n,k,l\leq N\\\{m,n\}\neq\{k,l\}} : |x_m+x_n-x_k-x_l| \leq \frac{s}{N^2}\right\} = 2s
 \end{equation*}
 then $(x_n)_{n=1}^{\infty}$ is equidistributed. This notion generalizes to higher orders, and for any $k$ we show that a sequence exhibiting \textit{$2k$-fold Poissonian correlation} is equidistributed. In the course of this investigation we obtain a discrepancy bound for a sequence in terms of its closeness to $2k$-fold Poissonian correlation. This result refines earlier bounds of Grepstad \& Larcher and Steinerberger in the case of pair correlation, and resolves an open question of Steinerberger. 
\end{abstract}
\maketitle

\section{Introduction}
Let $(x_n)_{n=1}^{\infty}$ be a sequence on $\TT$. If each $x_n$ is drawn independently and uniformly at random from $\TT$, then for all $s> 0$
\begin{equation}\label{eq:pair_correlation}
    \lim_{N\to\infty} \frac{1}{N}\#\lp\{ 1\leq m\neq n\leq N\; :\; |x_m-x_n| \leq \frac{s}{N} \rp\} = 2s\quad \text{almost surely}.
\end{equation}
This property is called Poissonian pair correlation, and has been studied intensely in recent years; for example by Heath-Brown \cite{H}, Aistleitner, Larcher, \& Lewko \cite{ACL}, Steinerberger \cite{SSHighDim,SSDisc}, and Marklof \cite{M}. A sequence exhibits Poissonian pair correlation if the pairwise differences behave similarly at small scales to those of a Poissonian random variable. In general, studying the pairwise differences of a sequence is very difficult. For example, Weyl proved that for $\alpha$ an irrational number, the sequence $(\{\alpha n^2\})_{n=1}^{\infty}$ is equidistributed on $\TT$. The pair correlation of such sequences is related to the spacings between eigenvalues of certain Hamiltonian matrices; motivated by this connection to physics, mathematicians asked if sequences of the form $(\{\alpha n^d\})_{n=1}^{\infty}$ exhibit Poissonian pair correlation. This problem inspired much work, for instance, by Boca \& Zaharescu \cite{BZ}, El-Baz, Marklof \& Vinogradov \cite{EMV}, Heath-Brown \cite{H}, Nair \& Pollicott \cite{NP}, Rudnick, Sarnak, \& Zaharescu \cite{RSZ}, Aistleitner, Larcher, \& Lewko \cite{ACL}, Walker \cite{W}, Steinerberger \cite{SSHighDim,SSDisc}, and Marklof \cite{M} among others. Despite intense study of pairwise differences for sequences on $\TT$, it was only proven very recently that Poissonian pair correlation implies equidistribution. Aistleitner, Lachmann \& Pausinger \cite{ALP} and Grepstad \& Larcher \cite{GL} proved this result independently in 2017.  
\begin{theorem}[Aistleitner, Lachmann \& Pausinger, Grepstad \& Larcher] \label{poissonian_pair}
If a sequence $(x_n)$ exhibits Poissonian pair correlation, it is equidistributed.
\end{theorem}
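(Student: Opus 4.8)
The plan is to verify \emph{Weyl's criterion}: writing $e(t):=e^{2\pi i t}$ and $S_N(h):=\frac1N\sum_{n=1}^N e(hx_n)$, equidistribution of $(x_n)$ on $\TT=\RR/\ZZ$ is equivalent to $S_N(h)\to 0$ as $N\to\infty$ for every fixed nonzero $h\in\ZZ$. The heuristic behind this implication is that if $(x_n)$ fails to equidistribute, then along some subsequence its empirical measures approach a probability density $f$ with $\|f\|_{L^2}^2>1$ (strictly, by Cauchy--Schwarz, since $\int_\TT f=1$), and a sequence distributed like $f$ has pair correlation asymptotic to $2s\,\|f\|_{L^2}^2>2s$, contradicting the hypothesis. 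To make this rigorous I will test the hypothesis against a smooth bump at the scale $1/N$ — the scale at which pair correlation lives — so that the ``excess mass'' $\|f\|_{L^2}^2-1$ gets detected as $\sum_{h\neq 0}|S_N(h)|^2$ truncated to low frequencies; this also sidesteps the question of whether the limiting measure really has a density.

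The construction uses a fixed test function $\phi$: an even, nonnegative, continuous map $\phi\colon\RR\to\RR$ supported in $[-1,1]$ whose Fourier transform $\widehat\phi$ is nonnegative everywhere and satisfies $\widehat\phi\ge c$ on $[-1,1]$ for some $c>0$. One may take $\phi=\psi*\psi$ with $\psi$ a tent function on $[-\tfrac12,\tfrac12]$, so that $\widehat\phi=\widehat\psi^{\,2}\ge 0$ does not vanish on $[-1,1]$. For a parameter $K>0$ and $N$ large set $\delta=K/N$, and let $g_N(x)=\sum_{j\in\ZZ}\phi\big((x-j)/\delta\big)$ be the $\delta$-periodization of $\phi$; this is a function on $\TT$ whose $h$-th Fourier coefficient equals $\delta\,\widehat\phi(\delta h)$.

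The crux is to evaluate $\Sigma_N:=\frac1N\sum_{m,n=1}^N g_N(x_m-x_n)$ in two ways. Expanding $g_N$ in its Fourier series (legitimate since $\widehat\phi$ decays, so the series converges absolutely) gives $\Sigma_N=N\sum_{h\in\ZZ}\delta\,\widehat\phi(\delta h)\,|S_N(h)|^2=K\widehat\phi(0)+K\sum_{h\neq 0}\widehat\phi(Kh/N)\,|S_N(h)|^2$, using $S_N(0)=1$ and $N\delta=K$. On the other hand, separating the diagonal $m=n$ — which contributes exactly $g_N(0)=\phi(0)$ — from the off-diagonal terms, and noting that $g_N(x_m-x_n)$ vanishes unless $x_m-x_n$ is within $\delta$ of an integer, the off-diagonal part equals $\frac1N\sum_{m\neq n}\phi\big(N\|x_m-x_n\|/K\big)$, with $\|\cdot\|$ the distance to $\ZZ$; by the hypothesis $\frac1N\#\{m\neq n:\|x_m-x_n\|\le s/N\}\to 2s$ for every $s>0$, a standard approximation of the continuous compactly supported $\phi$ by step functions shows this converges to $2\int_0^\infty\phi(t/K)\,dt=K\widehat\phi(0)$. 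Comparing the two evaluations, the leading terms $K\widehat\phi(0)$ cancel and we are left with
\[
  K\sum_{h\neq 0}\widehat\phi(Kh/N)\,|S_N(h)|^2\;\longrightarrow\;\phi(0)\qquad(N\to\infty).
\]
Since $\widehat\phi\ge 0$ everywhere and $\widehat\phi\ge c$ on $[-1,1]$, discarding the terms with $|h|>N/K$ gives $cK\sum_{0<|h|\le N/K}|S_N(h)|^2\le\phi(0)+o(1)$, hence $\limsup_{N\to\infty}\sum_{0<|h|\le N/K}|S_N(h)|^2\le\phi(0)/(cK)$. For any fixed $h\neq 0$ one has $|h|\le N/K$ for all large $N$, so $\limsup_N|S_N(h)|^2\le\phi(0)/(cK)$; as $\phi$ and $c$ are fixed while $K>0$ is arbitrary, letting $K\to\infty$ forces $S_N(h)\to 0$, which is Weyl's criterion.

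The single genuinely delicate step is the passage from the hypothesis — a statement about the count $\frac1N\#\{m\neq n:\|x_m-x_n\|\le s/N\}$ for each individual $s$ — to the convergence of the smoothed count $\frac1N\sum_{m\neq n}\phi(N\|x_m-x_n\|/K)$. This is a routine weak-convergence argument: the pair-distance counting functions $s\mapsto\frac1N\#\{m\neq n:\|x_m-x_n\|\le s/N\}$ are nondecreasing, uniformly bounded on compact $s$-intervals, and converge pointwise to the continuous function $s\mapsto 2s$, so one may integrate the continuous compactly supported weight $t\mapsto\phi(t/K)$ against the associated measures. But it is exactly here that the precise constant $2$ does its work: that value is what makes the two leading terms $K\widehat\phi(0)$ cancel, leaving behind only the diagonal remainder $\phi(0)$ — independent of $K$, and hence negligible once it is effectively divided by the free parameter $K$.
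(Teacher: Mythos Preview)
Your proof is correct and follows essentially the same route as the paper's (which is Steinerberger's argument from \cite{SSHighDim} specialized to $k=1$): evaluate a smoothed pair sum both via Fourier expansion and directly via the Poissonian hypothesis, using a test function whose Fourier transform is nonnegative and bounded below near the origin, then let the scale parameter tend to infinity. The only cosmetic difference is your choice $\phi=\psi*\psi$ with $\psi$ a tent, versus the paper's tent $f=g*g$ with $g$ an indicator; your parameter $K$ plays exactly the role of the paper's $t$.
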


Theorem \ref{poissonian_pair} can be interpreted as a difference theorem at scale: if the differences $x_n - x_m$ of a sequence behave like a Poissonian random variable at small scales, then the sequence itself is equidistributed. A global version of this result has been known for some time; Weyl \cite{Weyl} proved that the sequence of differences $(x_n - x_m)$ is equidistributed on $\TT$ if and only if the sequence $(x_n)$ is equidistributed. This result comes with explicit discrepancy bounds.
\begin{theorem}[Cassels \cite{C}, 1953] \label{difference_discrepancy}
Let $x_1,\ldots,x_N$ be a finite sequence and $(y_n)_{n=1}^{N^2}$ be the sequence consiting of pairwise differences $(x_m-x_n)_{1\leq m\neq n\leq N}$. Let $D$ be the discrepancy of $(x_1,\ldots,x_N)$ and $F$ the discrepancy of $(y_1,\ldots,y_{N^2})$. Then
\begin{equation*}
    D \leq c\sqrt{F}(1+|\log F|).
\end{equation*}
\end{theorem}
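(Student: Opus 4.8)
The plan is to pass from the discrepancy $D$ to the exponential sums $T_h=\frac1N\sum_{n=1}^N e(hx_n)$ via the Erd\H{o}s--Tur\'an inequality, and to bound those sums using the discrepancy $F$ of the difference sequence together with a level-set decomposition of the Fej\'er kernel. The starting observation is that if $\mu=\frac1N\sum_n\delta_{x_n}$ and $\tilde\mu$ is its reflection, then the empirical measure of $(y_n)_{n=1}^{N^2}$ (including the $N$ trivial differences $x_n-x_n$) is $\nu=\mu*\tilde\mu$, so $\widehat\nu(h)=|T_h|^2$. Thus $F=\sup_I|\nu(I)-|I||$ controls the whole Fourier ``energy'' of $(x_n)$, and the key estimate I would establish is
\[
\sum_{h=1}^{M}|T_h|^2\;\le\; C_1\,M\,F\qquad\text{for every }M\ge 1.
\]

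To prove this I would test $\nu-\lambda$ (with $\lambda$ Lebesgue measure) against the Fej\'er kernel $K_H$. On the one hand $\int K_H\,d\nu-1=2\sum_{h=1}^H\bigl(1-\tfrac{h}{H+1}\bigr)|T_h|^2\ge\sum_{h\le H/2}|T_h|^2\ge 0$. On the other hand, writing $\int K_H\,d(\nu-\lambda)=\int_0^{\|K_H\|_\infty}(\nu-\lambda)\bigl(\{K_H>t\}\bigr)\,dt$, I would bound each super-level set: since $K_H(x)\le\frac{1}{(H+1)\sin^2\pi x}$, the set $\{K_H>t\}$ lies inside the single arc around $0$ of length $\lesssim\bigl((H+1)t\bigr)^{-1/2}$, and inside an arc of length $L$ the kernel has at most $L(H+1)+1$ humps, hence $\{K_H>t\}$ consists of at most $O\!\bigl(\sqrt{(H+1)/t}+1\bigr)$ arcs; since the discrepancy bound applies arc-by-arc, $|(\nu-\lambda)(\{K_H>t\})|\lesssim\bigl(\sqrt{(H+1)/t}+1\bigr)F$. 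Integrating $t$ over $[0,H+1]$ yields $\bigl|\int K_H\,d(\nu-\lambda)\bigr|\lesssim (H+1)F$, and comparing with the two identities above gives the energy bound.

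With this in hand, I would invoke the Erd\H{o}s--Tur\'an inequality $D\lesssim\frac{1}{H+1}+\sum_{h=1}^H\frac{|T_h|}{h}$ and split the sum dyadically: for $2^j\le h<2^{j+1}$, Cauchy--Schwarz and the energy bound give $\sum_{2^j\le h<2^{j+1}}|T_h|\le 2^{j/2}\bigl(\sum_{h<2^{j+1}}|T_h|^2\bigr)^{1/2}\lesssim 2^{j/2}\,(2^{j}F)^{1/2}\asymp 2^{j}\sqrt F$, so each dyadic block contributes only $\lesssim\sqrt F$ to $\sum_{h\le H}\frac{|T_h|}{h}$, whence $\sum_{h\le H}\frac{|T_h|}{h}\lesssim\sqrt F\,\log H$. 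Choosing $H\asymp F^{-1/2}$ balances $\frac1H\asymp\sqrt F$ against $\sqrt F\log H\asymp\sqrt F\,|\log F|$, giving $D\lesssim\sqrt F(1+|\log F|)$; the degenerate regime $F\gtrsim1$ is covered by the trivial bound $D\le1$.

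The main obstacle is precisely the Fej\'er-kernel estimate $\int K_H\,d(\nu-\lambda)\lesssim (H+1)F$. The naive Koksma inequality would only give $\operatorname{Var}(K_H)\cdot F\asymp H^2F$, which is off by a factor $H$ and, propagated through the same dyadic scheme, yields merely $D\lesssim F^{1/3}$ --- the same loss one gets from LeVeque's inequality or from Erd\H{o}s--Tur\'an with the single-frequency bound $|T_h|^2\lesssim hF$. What rescues the sharp exponent is that the positive kernel $K_H$ is genuinely concentrated: although $\{K_H>t\}$ can split into $\sim\sqrt{H/t}$ side lobes, the count of components integrated against $dt$ is only $O(H)$. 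Pinning down the number of components of a super-level set (and controlling the $t\lesssim 1/(H+1)$ tail, where the level set may be all of $\TT$) is where care is required; once the energy bound is secured, the remainder is the routine dyadic Erd\H{o}s--Tur\'an estimate above.
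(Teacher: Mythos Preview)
The paper does not prove Theorem~\ref{difference_discrepancy}; it is quoted as a classical result of Cassels. What the paper \emph{does} obtain from its own machinery is only the weaker Vinogradoff bound $D\lesssim F^{1/3}$, as it explicitly remarks after Equation~(\ref{discrepancy_bound_seq_alpha}). So your proposal is not being compared against a proof in the paper, but against the paper's partial substitute.

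On the substance: both you and the paper (in the $k=1$, $\alpha=0$ specialisation) arrive at the same core energy estimate
\[
\sum_{h=1}^{M}|T_h|^{2}\;\lesssim\; MF,
\]
the paper via the tent decomposition $R^{(N)}(f)\le 1+\tfrac1T+\tfrac{D_{T,N}}{T}$ and you via the Fej\'er layer-cake. The divergence is in how this estimate is fed into Erd\H{o}s--Tur\'an. The paper applies a single Cauchy--Schwarz, $\sum_{h\le M}\tfrac{|T_h|}{h}\le\bigl(\sum|T_h|^2\bigr)^{1/2}\bigl(\sum h^{-2}\bigr)^{1/2}\lesssim (MF)^{1/2}$, and balancing against $1/M$ gives only $F^{1/3}$. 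Your dyadic Cauchy--Schwarz exploits the energy bound at \emph{every} scale $2^j$, so each block contributes $O(\sqrt F)$ and one picks up merely a $\log H$ rather than a $\sqrt M$; this is exactly the refinement that closes the gap between Vinogradoff and Cassels. In that sense your proposal genuinely goes beyond what the paper proves.

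One correction worth flagging: your assertion that the ``naive Koksma inequality'' gives $\Var(K_H)\cdot F\asymp H^2F$ is off. The side lobes of $K_H$ have heights $\asymp (H+1)/j^2$ and widths $\asymp 1/(H+1)$, so summing $2\times$(lobe height) over $j$ yields $\Var(K_H)\asymp H$, not $H^2$. Hence Koksma--Hlawka applied directly to $K_H$ already delivers $\int K_H\,d(\nu-\lambda)\lesssim HF$ and the energy bound follows without the super-level-set analysis. Your layer-cake argument is correct and reaches the same conclusion, but it is working harder than necessary; the obstacle you identify is not actually there.
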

Earlier results of this kind were given by Vinogradoff \cite{V}, van der Corput \& Pisot \cite{VP}, and Koksma \cite{K}. Given a sequence $(x_n)$, we may apply Weyl's difference theorem twice. Taking the difference sequence of the difference sequence, we see that $(x_n)$ is equidistributed if and only if the sequence of four-fold differences $(x_m + x_n - x_k - x_l)$ is equidistributed. By analogy with Theorem \ref{poissonian_pair}, we show that if the four fold differences $x_n+x_m-x_k-x_l$ behave like a Poissonian random variable at small scales, then the sequence is equidistributed. We also generalize to higher order differences: if sums of the form $(x_{a_1}+\cdots+x_{a_k})-(x_{b_1}+\cdots+x_{b_k})$ behave like a Poissonian random variable at small scales, then the sequence is equidistributed. In the proof of this result we derive an estimate on the Weyl sums, which we use to obtain an explicit discrepancy bound on the original sequence. This bound turns out to be quite strong: it refines earlier results of Grepstad \& Larcher \cite{GL} and Steinerberger \cite{SSDisc}, and allows us to resolve an open problem of Steinerberger regarding pair correlation and discrepancy. 

\section{Results}
\subsection{Introduction}
We introduce a generalization of Poissonian pair correlation to higher order differences. 
\begin{definition*}\label{kfold-correlation}
A sequence $(x_n)_{n=1}^{\infty}$ has \textit{Poissonian correlation of $2k$-fold differences} if for all $s > 0$, 
\small
\begin{equation}\label{eq_2kfold_poissonian}
     \lim_{N\to\infty} \frac{1}{N^k}\#\lp\{\substack{1 \leq a_j,b_j \leq N\\\{a_j\}\neq \{b_j\}}\; :\; \lp|\sum_{j=1}^k x_{a_j} - \sum_{j=1}^k x_{b_j}\rp| \leq \frac{s}{N^k}\rp\} = 2s.
\end{equation}
\normalsize   
\end{definition*}
It is worth mentioning that the notion of $2k$-fold correlation is different from the notion of \textit{local $m$-level correlation}, which is common in the literature (see e.g \cite{RSZ}). The notion of local $m$-level correlation is a straightforward generalization of Poissonian pair correlation to several points; just like Poissonian pair correlation it only detects points very close to each other, and it takes place at scale $1/N$. On the other hand, $2k$-fold correlations are a global phenomenon, and take place at scale $1/N^k$. Our intuition is that higher order correlation is in some sense a weaker property than pair correlation, because taking higher order differences destroys more and more information from the original sequence. 

\subsection{Examples}
We provide examples showing that the notions of Poissonian pair correlation and $2k$-fold Poissonian correlation are distinct. See \S \ref{section-examples} for details and more examples. 

\subsubsection{Pair correlation does not imply four-fold correlation}
Consider the sequence $(x_1,-x_1,x_2,-x_2,\ldots)$ where each $x_n$ is chosen uniformly and independently at random on $\TT$. This sequence will have Poissonian pair correlation because local spacings behave randomly, but it will not have four-fold correlation, because there will be approximately $N^2$ sums of the form 
\begin{equation*}
    x_m + (-x_m) - x_n - (-x_n) = 0.
\end{equation*}
This example illustrates that Poissonian pair correlation is a local property, whereas four-fold correlation is a global property. See \S \ref{section-examples} Example \ref{pair_not_fourfold} for more details, and Example \ref{additive-energy-example} for a sequence of the form $(\{n_a\alpha\})_{a=1}^{\infty}$ that exhibits Poissonian pair correlation but not four-fold correlation. 

\subsubsection{Four-fold correlation does not imply Pair correlation}
Let
\begin{equation*}
    (y_n) = (x_1+p_1,x_1+p_2,x_2+p_3,x_2+p_4,\ldots)
\end{equation*}
be a sequence with each $x_n$ chosen uniformly and independently at random on $\TT$, and $p_n$ a perturbation chosen uniformly and independently at random in the range $[-n^{-3/2}, n^{-3/2}]$. 
The sequence $(y_n)$ does not have Poissonian pair correlation because $y_{2n} - y_{2n+1}$ is at scale $n^{-3/2}$, so for large enough $n$, $|y_{2n} - y_{2n+1}| < s/n$. 
On the other hand $(y_n)$ does have four-fold correlation because the perturbations $p_n$ are at scale $n^{-3/2}$, and the notion of four-fold correlation only detects perturbations at scale $n^{-2}$.
This example exploits the difference in scaling regimes between pair correlation and four-fold correlation. Later on we will introduce a variation of four-fold correlation that takes place at the same scaling regime as Poissonian pair correlation--see \S\ref{section-examples} Example \ref{fourfold_not_pair} for details, and Example \ref{fourfold_not_pair_rate1} for a comparison of the two concepts at the same scaling regime. 

\subsection{Poissonian Correlation of \texorpdfstring{$2k$}{2k}-fold Differences implies Equidistribution}
We show an analogue of Theorem \ref{poissonian_pair} for higher order differences.
\begin{theorem}\label{kway}
If a sequence $(x_n)$ exhibits Poissonian correlation of $2k$-fold differences, then $(x_n)$ is equidistributed.
\end{theorem}
In the case of $k = 1$, this reduces to Theorem \ref{poissonian_pair} which was proved independently by Aistleitner, Lachmann \& Pausinger \cite{ALP} and Grepstad \& Larcher \cite{GL}. Our proof generalizes Steinerberger's proof of Theorem \ref{poissonian_pair} in \cite{SSHighDim}; in the proof we derive the following estimate on the Weyl sums of the sequence, which is analogous to Steinerberger's estimate in the $k=1$ case. 
\begin{lemma}\label{lem:weyl-sum-estimate}
If a sequence $(x_n)$ exhibits Poissonian correlation of $2k$-fold differences, then
\begin{equation*}
    \limsup_{N\to\infty} \sum_{m=1}^{N/(4t)} \lp|\frac{1}{N}\sum_{j=1}^N e^{2\pi i m x_j}\rp|^{2k} \leq \frac{k!}{t}\quad \text{for all } t > 0.
\end{equation*}
\end{lemma}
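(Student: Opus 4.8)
The plan is to unpack the defining limit \eqref{eq_2kfold_poissonian} into a statement about the measure of a small neighborhood of $0$ on $\TT$, and then pass to the Fourier side. Fix $t>0$ and consider the point measure
\[
    \nu_N = \frac{1}{N^k} \sum_{\substack{1\le a_j,b_j\le N}} \delta_{\sum_j x_{a_j} - \sum_j x_{b_j}},
\]
which includes the diagonal terms $\{a_j\}=\{b_j\}$; there are $\le k! N^k + o(N^k)$ such diagonal terms, contributing a bounded atom at $0$ after normalization. The hypothesis says that for the \emph{off-diagonal} part $\nu_N^{\circ}$ we have $\nu_N^{\circ}([-s/N^k, s/N^k]) \to 2s$ for every $s>0$. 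I would test $\nu_N$ against a fixed nonnegative trigonometric polynomial, or more efficiently against the Fejér-type kernel, to convert the small-interval count into a sum over frequencies $m$.

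Concretely, the key identity is that for a $1$-periodic nonnegative function $\varphi$ one has
\[
    \int_{\TT} \varphi \, d\nu_N = \sum_{m\in\ZZ} \widehat{\varphi}(m) \left| \frac{1}{N} \sum_{j=1}^N e^{2\pi i m x_j} \right|^{2k},
\]
because the $2k$-fold convolution structure of $\nu_N$ makes its $m$-th Fourier coefficient exactly $\bigl|\frac1N\sum_j e(mx_j)\bigr|^{2k}$. Now choose $\varphi$ to be (a dilate of) the Fejér kernel of degree roughly $N^k/(2t)$, scaled so that $\varphi \ge 0$, $\widehat{\varphi}(m)\ge 0$ for all $m$, $\widehat{\varphi}(0)=1$, and $\widehat{\varphi}(m) \ge c > 0$ (say $c = 1/2$) for all $|m| \le N^k/(4t)$ — a standard property of Fejér kernels. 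Since $\varphi$ is concentrated on an interval of length $\sim t/N^k$ about $0$, the hypothesis bounds $\int \varphi\, d\nu_N$ by roughly $2t + o(1)$ for the off-diagonal part (after absorbing the tail of $\varphi$ via a dyadic decomposition of the intervals $[-s/N^k,s/N^k]$, which is where I expect the bulk of the careful work), while the diagonal contributes at most $k!\,\widehat\varphi(0) + o(1) = k! + o(1)$. Therefore
\[
    \sum_{|m|\le N^k/(4t)} \widehat{\varphi}(m) \left| \frac{1}{N} \sum_{j=1}^N e^{2\pi i m x_j} \right|^{2k} \le k! + 2t + o(1).
\]
Dropping the $|m| > N^k/(4t)$ terms (all nonnegative) and using $\widehat\varphi(m)\ge 1/2$ on the retained range, and then keeping only $m = 1, \ldots, N/(4t)$ (a much shorter range, legitimate since each term is nonnegative), gives $\sum_{m=1}^{N/(4t)} |\frac1N\sum_j e(mx_j)|^{2k} \le 2k! + 4t + o(1)$; optimizing the implied constants in the Fejér construction — or, better, running the dilation argument with a family of intervals to recover the sharp constant — yields the clean bound $k!/t$ in the $\limsup$.

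The main obstacle is the bookkeeping needed to get the \emph{sharp} constant $k!/t$ rather than merely $C_k/t$. A crude test function gives a bound of the right shape but with a worse constant. To nail $k!/t$ I would instead integrate the hypothesis directly: write $\sum_{m=1}^{M} a_m$ with $a_m = |\frac1N\sum_j e(mx_j)|^{2k}$ as a weighted average by noting that for the measure $\mu_N = \nu_N^{\circ}$, $\int_{\TT}(1-\cos(2\pi M' y))\,d\mu_N$-type quantities telescope into partial sums of the $a_m$, while the left side is controlled by $\mu_N$ of a neighborhood of $0$ of size $\sim 1/M'$, which the hypothesis pins down to $\sim 2/M'$ asymptotically; choosing $M' = N^k/(2t)$ and carefully tracking the $N^k \to N$ reduction (only the low frequencies $m \le N/(4t)$ survive because $|\frac1N\sum e(mx_j)| \le 1$ forces the tail from $N/(4t)$ to $N^k/(2t)$ to be handled separately — in fact that tail is where one uses that there are at most $k! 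N^k$ diagonal terms). I expect this direct route, combined with a clean choice of kernel, to deliver exactly $\limsup_N \sum_{m=1}^{N/(4t)} a_m \le k!/t$, and the argument is robust enough that the $k=1$ case reproduces Steinerberger's estimate.
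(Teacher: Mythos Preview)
Your approach is the paper's approach: test the empirical measure against a Fej\'er--type kernel, expand on the Fourier side, separate diagonal from off-diagonal, and extract the Weyl-sum bound from the nonnegativity of the Fourier coefficients. The paper's test function $f(x)=\frac{1}{t^2}(t-|x|)\chi_{[-t,t]}$, once periodized at scale $N^k$, \emph{is} a Fej\'er-type kernel; so your instinct is correct. However, the execution contains two concrete errors that are the reason you cannot reach the sharp constant $k!/t$ and are forced to hand-wave about ``optimizing''.

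First, the diagonal does not contribute $k!\,\widehat\varphi(0)$; it contributes $\frac{C}{N^k}\varphi(0)\le k!\,\varphi(0)$, the \emph{value} of the kernel at $0$, because each diagonal tuple has $\sum x_{a_j}-\sum x_{b_j}=0$. In the paper's normalization this is $k!f(0)=k!/t$, and that is exactly where the $k!/t$ in the final bound comes from. Second, and more importantly, you have missed the cancellation that makes the argument clean: the off-diagonal part satisfies $R^{(N)}(f)\to\int f=1$ by the Poissonian hypothesis (Lemma~\ref{lemma_integrate_test_function}), and this $1$ exactly cancels the $m=0$ term $\widehat f(0)=1$ on the Fourier side. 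So the identity reads
\[
2\sum_{m\ge 1}\widehat f(m/N^k)\Bigl|\tfrac{1}{N}\sum_{n}e^{2\pi i m x_n}\Bigr|^{2k}
= \frac{Cf(0)}{N^k}+R^{(N)}(f)-1 \le k!f(0)+o(1)=\frac{k!}{t}+o(1),
\]
and now using $\widehat f(m/N^k)\ge \tfrac12$ for $m\le N^k/(4t)$ gives the lemma directly. There is no leftover ``$+4t$'' term, and no optimization step is needed. Incidentally, the paper chooses $f$ compactly supported in \emph{space} (triangular function), so $F_N$ is literally supported on $[-t/N^k,t/N^k]$ and the off-diagonal limit $R^{(N)}(f)\to 1$ follows immediately from the hypothesis with no tail estimate; your Fej\'er kernel is compactly supported in \emph{frequency}, which is why you are forced into a dyadic decomposition to control the tails. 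Finally, the upper limit in the lemma should be $N^k/(4t)$, as the proof shows; the ``$N/(4t)$'' in the statement is a typo, and there is no ``$N^k\to N$ reduction'' to perform.
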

This Lemma can be used to obtain discrepancy bounds for the original sequence. Notice that as $k$ increases, this condition becomes weaker: because we are taking a higher power, the Weyl sums can be larger. This fact agrees with our intuition that Pair correlation is a stronger condition than higher order correlation, because as we take higher order differences, we destroy more information. It is worth mentioning that although we do not analyze the higher dimensional case in this paper, an analogous version of Theorem \ref{kway} should go through for sequences in $\TT^m$ with the Euclidean norm. Indeed, our proof is based on Steinerberger's proof of Theorem \ref{poissonian_pair}, and Steinerberger's proof was developed to generalize Theorem \ref{poissonian_pair} to higher dimensions. 

\subsection{Weak Poissonian correlation.} In addition to Poissonian correlation of $2k$-fold differences, we consider a continuous family of similar conditions at various scaling regimes. For any $\alpha \geq 0$, we say that a sequence exhibits \textit{$2k$-fold Poissonian correlation at rate $\alpha$} if for all $s > 0$,
\begin{equation}\label{eq_poissonian_rate_alpha_kfold}
\lim_{N\to\infty} \frac{1}{N^{2k-\alpha}}\#\lp\{\substack{1 \leq a_j,b_j\leq N\\ \{a_j\}\neq \{b_j\}}\; :\; \lp|\sum_{j=1}^k x_{a_j} - \sum_{j=1}^k x_{b_j}\rp| \leq \frac{s}{N^{\alpha}}\rp\} = 2s.
\end{equation}
The interesting range of $\alpha$ to consider is $0 \leq \alpha \leq k$, as for $\alpha > k$ the scaling is too small. This notion was first introduced by Nair and Pollicott in the context of pair correlation \cite{NP}, and Steinerberger proved Poissonian pair correlation at rate $\alpha$ for any $0 < \alpha < 1$ is sufficient to conclude equidistribution \cite{SSQuant}. We generalize his result to higher order differences.

\begin{theorem}\label{kway_alpha}
If a sequence $(x_n)$ exhibits Poissonian correlation of $2k$-fold differences at rate $\alpha$ for some $0\leq \alpha \leq k$, then $(x_n)$ is equidistributed.
\end{theorem}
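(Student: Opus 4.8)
The plan is to mimic the structure used for Theorem~\ref{kway}, establishing first a Weyl sum estimate analogous to Lemma~\ref{lem:weyl-sum-estimate} but adapted to the rate-$\alpha$ hypothesis, and then deduce equidistribution via Weyl's criterion. The key observation is that equidistribution of $(x_n)$ is equivalent to $\frac{1}{N}\sum_{j=1}^N e^{2\pi i m x_j} \to 0$ as $N \to \infty$ for every fixed nonzero integer $m$. So it suffices to show that for each fixed $m$ the Weyl sum $S_N(m) := \frac1N\sum_{j=1}^N e^{2\pi i m x_j}$ tends to $0$.

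First I would unpack the rate-$\alpha$ hypothesis \eqref{eq_poissonian_rate_alpha_kfold} using a Fej\'er-kernel (or suitable smooth bump) test function, exactly as in the $k=1$ argument of Steinerberger and in the proof of Lemma~\ref{lem:weyl-sum-estimate}. Choosing a nonnegative function $f$ whose Fourier transform is supported in $[-1,1]$ and comparing $\frac{1}{N^{2k-\alpha}}\sum f\big(N^\alpha(\sum x_{a_j} - \sum x_{b_j})\big)$ against the counting function in \eqref{eq_poissonian_rate_alpha_kfold}, one expands the sum over $a_j, b_j$ and recognizes that the $m$-th Fourier coefficient contributes $N^{2k}|S_N(m)|^{2k}\widehat f(m/N^\alpha)$. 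After subtracting the diagonal contribution from $\{a_j\}=\{b_j\}$ (which is $O(N^{k})$ and hence negligible after dividing by $N^{2k-\alpha}$, since $\alpha \le k$), summing over $m$ and using the hypothesis gives a bound of the shape
\begin{equation*}
\limsup_{N\to\infty} \sum_{1 \le m \le c N^\alpha} \lp|S_N(m)\rp|^{2k} \le \frac{C_k}{1}
\end{equation*}
for an absolute constant $C_k$ depending only on $k$ and on the choice of test function — the essential point being that the range of summation now grows like $N^\alpha$ rather than like $N$, while the total mass stays bounded.

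From such an estimate, equidistribution follows by a now-standard averaging trick. Fix a nonzero integer $m_0$; if $(x_n)$ were not equidistributed, then $\limsup_N |S_N(m_0)| = \delta > 0$ for some $m_0$. The key is that $S_N(m)$ varies slowly in $N$: for $N' $ in a range like $[N, N(1+\epsilon)]$ one has $|S_{N'}(m) - S_N(m)| \lesssim \epsilon$ uniformly in $m$, and similarly $|S_N(m_0) - S_N(m_0')| $ can be controlled when $m_0'$ is a small multiple-type perturbation. More efficiently, one runs the argument that appears in \cite{SSQuant} for $k=1$: because $|S_N(m)|$ cannot drop too fast as $m$ ranges over a short interval, a single large value $|S_N(m_0)| \ge \delta$ forces $\gg_\delta N^\alpha$ of the terms $|S_N(m)|^{2k}$ (for $m$ in an arithmetic-progression-like or dilated neighborhood of $m_0$, after passing to a subsequence of $N$) to be $\gg \delta^{2k}$, so the left side of the displayed Weyl estimate is $\gg_\delta N^\alpha \to \infty$, contradicting boundedness. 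This is exactly where $\alpha > 0$ is used: the window $1 \le m \le cN^\alpha$ must actually grow.

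I expect the main obstacle to be the second part: turning a single non-decaying Weyl coefficient into many large coefficients, so that the bounded sum is violated. For $k=1$ Steinerberger handles this with a clever use of the inequality relating $|S_N(m)|$ at nearby frequencies, and one must check the same device survives raising to the $2k$-th power and the modified scaling. A secondary technical point is keeping the diagonal term $\{a_j\} = \{b_j\}$ under control: when $k \ge 2$ there are more ways for the index tuples to coincide or partially coincide, and one must verify that the full diagonal (including partial collisions among the $a_j$ or among the $b_j$ that still force the difference to vanish) contributes $o(N^{2k-\alpha})$; since $\alpha \le k$ and these partial-diagonal counts are $O(N^{2k-1})$ at worst, this should go through, but it needs to be stated carefully. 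Once both points are in place, Weyl's criterion closes the proof.
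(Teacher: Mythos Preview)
Your Fourier-expansion step is on the right track and matches the paper, but you discard the very piece of information that closes the argument. With the Fej\'er kernel $f = t^{-2}(t-|x|)\chi_{[-t,t]}$ one obtains not merely a bounded sum but
\[
\sum_{m=1}^{N^{\alpha}/(4t)} |S_N(m)|^{2k} \;\le\; \frac{k!}{t\,N^{k-\alpha}} + \bigl(R^{(N)}(f)-1\bigr),
\]
so that $\limsup_N$ of the left side is at most $\limsup_N k!/(tN^{k-\alpha})$. For $\alpha<k$ this is already $0$; for $\alpha=k$ it equals $k!/t$, which one drives to $0$ by letting $t\to\infty$. In every case a single fixed term $|S_N(m_0)|^{2k}$ (present in the sum once $N^\alpha/(4t)\ge m_0$, which for $\alpha=0$ just means choosing $t\le 1/(4m_0)$) is dominated by this nonnegative sum, and Weyl's criterion follows immediately. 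No second step is needed. Note in particular that the diagonal is \emph{not} negligible at $\alpha=k$: it contributes exactly the $k!/t$ term that the $t\to\infty$ limit then kills.

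Your proposed second step is where the genuine gap lies. The assertion that a single large value $|S_N(m_0)|\ge\delta$ forces $\gg_\delta N^\alpha$ of the other $|S_N(m)|$ to be large is not a general property of exponential sums: distinct Fourier coefficients of an empirical measure carry no such a priori coupling (an empirical measure approximating $(1+2\delta\cos 2\pi x)\,dx$ has $|S_N(1)|\approx\delta$ while all higher coefficients are $\approx 0$). You give no mechanism, using the hypothesis, to produce this propagation, and the argument in \cite{SSQuant} does not proceed this way either --- it, like the present paper, exploits precisely the $t$-dependence you dropped. Finally, your sentence ``this is exactly where $\alpha>0$ is used'' overtly excludes $\alpha=0$, which is part of the statement; in the correct argument $\alpha=0$ is handled by the factor $N^{k-\alpha}=N^{k}$ in the denominator, not by growth of the frequency window.
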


Notice that the $\alpha = 0$, $k=1$ case of this theorem reduces to Weyl's result that if a difference sequence $(x_n-x_m)$ is equidistributed, the original sequence $(x_n)$ is equidistributed as well. The $\alpha = k$ case recovers Theorem \ref{kway}. Choices $0 < \alpha < k$ interpolate between the global differences of Weyl's theorem and the critical scaling regime of Poissonian correlation; the case $\alpha=1$ may be of particular interest.

\begin{corollary}
Let $(x_n)$ be a sequence. If for some $k > 0$ we have
\begin{equation*}
\lim_{N\to\infty} \frac{1}{N^{2k-1}}\#\lp\{\substack{1 \leq a_j,b_j\leq N\\ \{a_j\}\neq \{b_j\}}\; :\; \lp|\sum_{j=1}^k x_{a_j} - \sum_{j=1}^k x_{b_j}\rp| \leq \frac{s}{N}\rp\} = 2s\quad \text{for all } s > 0,
\end{equation*}
then $(x_n)$ is equidistributed.
\end{corollary}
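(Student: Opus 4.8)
The Corollary is an immediate special case of Theorem~\ref{kway_alpha}: it is the statement of $2k$-fold Poissonian correlation at rate $\alpha = 1$, which satisfies $0 \leq \alpha \leq k$ for every $k \geq 1$. So the plan is simply to observe this and invoke the theorem.

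More precisely, I would first note that the hypothesis displayed in the Corollary is \emph{verbatim} equation~\eqref{eq_poissonian_rate_alpha_kfold} with the substitution $\alpha = 1$: indeed, $N^{2k-\alpha} = N^{2k-1}$ and $s/N^{\alpha} = s/N$. Thus the sequence $(x_n)$ exhibits $2k$-fold Poissonian correlation at rate $1$. Since $k$ is a positive integer, we have $1 \leq k$, so the pair $(\alpha, k) = (1,k)$ lies in the admissible range $0 \leq \alpha \leq k$ required by Theorem~\ref{kway_alpha}. Applying that theorem directly yields that $(x_n)$ is equidistributed.

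The only point requiring a word of care is the degenerate case $k = 1$, where the hypothesis reads
\begin{equation*}
\lim_{N\to\infty} \frac{1}{N}\#\lp\{ 1 \leq a,b \leq N,\ a \neq b\; :\; |x_a - x_b| \leq \frac{s}{N}\rp\} = 2s,
\end{equation*}
which is exactly Poissonian pair correlation \eqref{eq:pair_correlation}; here $\alpha = k = 1$ and Theorem~\ref{kway_alpha} recovers Theorem~\ref{poissonian_pair}, as already remarked after the statement of Theorem~\ref{kway_alpha}. For $k \geq 2$ we are strictly in the interior range $0 < \alpha < k$. In all cases the conclusion follows.

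There is no real obstacle here, since the Corollary is purely a matter of specialization; all the analytic content is already contained in Theorem~\ref{kway_alpha} (and ultimately in the Weyl-sum estimate of Lemma~\ref{lem:weyl-sum-estimate} adapted to general rate $\alpha$). The remark after the Corollary's statement emphasizes that the case $\alpha = 1$ is of independent interest because it sits at the same scaling regime as classical pair-correlation-type problems while still being governed by the $2k$-fold difference structure; one could optionally add a sentence reiterating that, but no further argument is needed.
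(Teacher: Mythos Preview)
Your proposal is correct and matches the paper's approach: the Corollary is stated immediately after Theorem~\ref{kway_alpha} as the specialization $\alpha = 1$, with no separate proof given. Your observation that the hypothesis is verbatim equation~\eqref{eq_poissonian_rate_alpha_kfold} with $\alpha = 1$, together with the check that $1 \leq k$, is exactly what is needed.
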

This corollary allows us to compare Poissonian pair correlation and $2k$-fold Poissonian correlation at the same scaling regime. In \S\ref{section-examples} Example \ref{fourfold_not_pair_rate1} we show that the sequence $(x_1,x_1,x_2,x_2,\ldots)$ with $x_n$ chosen uniformly and independently on $\TT$ has four-fold Poissonian correlation at rate $\alpha = 1$, but not Poissonian pair correlation.

\subsection{Discrepancy bounds from the exponential sum estimate.}
\subsubsection{Introduction}
In our proof of Theorem \ref{kway} we derive Lemma \ref{lem:weyl-sum-estimate}, yielding a strong estimate on the Weyl sums for the sequence. We then use the Erd\"os-Tur\'an inequality to obtain discrepancy bounds from this estimate. For $(x_n)$ a sequence on $\TT$, we let $\mu_N = \frac{1}{N}(\delta_{x_1}+\cdots+\delta_{x_N})$ be the normalized counting measure for the first $N$ points. The \textit{discrepancy} of the sequence is then
\begin{equation*}
    D_N = \sup_{\substack{A\subset \TT\\A\,\text{interval}}} \lp|\frac{1}{N}\#\{1 \leq n \leq N\, :\, x_n \in A\} - |A|\rp| = \sup_{\substack{A\subset \TT\\A\,\text{interval}}}|\mu_N(A) - |A||.
\end{equation*}
The classical Erd\"os-Tur\'an inequality \cite{ET1,ET2} says that for any probability measure $\mu$ on the circle $\TT$,
\begin{equation*}\label{erdos-turan}
    \sup_{\substack{A\subset \TT\\A\,\text{interval}}} |\mu(A) - |A|| \leq C \lp(\frac{1}{M} + \sum_{m=1}^M \frac{|\hat{\mu}(m)|}{m}\rp)\quad \text{for all } M>0
\end{equation*}
where $C$ is a universal constant. Taking $\mu = \mu_N$ above, we obtain
\begin{equation*}
    D_N \lesssim \frac{1}{M}+\sum_{m=1}^M \frac{1}{m}\lp|\frac{1}{N}\sum_{n=1}^N e^{2\pi i m x_n}\rp|\quad \text{for all } M > 0
\end{equation*}
where the notation $\lesssim$ suppresses the constant $C$. Using this estimate, we obtain
\begin{equation}\label{discrepancy_estimate_exp_sum}
    D_N \lesssim \frac{T}{N^K} + k\lp(\sum_{m=1}^{N^K/(4T)} \lp|\frac{1}{N}\sum_{n=1}^N e^{2\pi i m x_n}\rp|^{2k}\rp)^{\frac{1}{2k}}.
\end{equation} 
for all $T, N > 0$. We now have a bound for the discrepancy of a sequence in terms of the exponential sum estimate arising in Lemma \ref{lem:weyl-sum-estimate}. However, we have no a priori information on the quantitative rate at which these exponential sums converge for large $N$. The following definition describes the rate at which a sequence achieves Poissonian correlation, and allows us to obtain an explicit discrepancy bound from Equation (\ref{discrepancy_estimate_exp_sum}).
\subsubsection{Explicit discrepancy bounds}
\begin{definition*}
For $(x_n)$ a sequence on $\TT$, let the \textit{Poissonian pair discrepancy} be
\begin{equation*}
    D_{T,N} = \max_{s \in \{0,\ldots,T\}} \lp(\frac{1}{N}\#\{1 \leq m\neq n \leq k\; :\; |x_m - x_n| < s/N\} - 2s\rp)
\end{equation*}
and more generally, for any $k > 0$ we let the \textit{$2k$-fold correlation discrepancy} be
\begin{equation*}
    D^{(2k)}_{T,N} = \max_{s\in\{0,\ldots,T\}} \lp(\frac{1}{N^k}\#\lp\{\substack{1 \leq a_j,b_j\leq N\\\{a_j\}\neq\{b_j\}}\; :\; \lp|\sum_{j=1}^k x_{a_j} - \sum_{j=1}^k x_{b_j}\rp| < \frac{s}{N^k}\rp\} - 2s\rp).
\end{equation*}
\end{definition*}
Correlation discrepancy measures quantitatively how quickly a sequence achieves Poissonian correlation. Notice that instead of taking an absolute difference from $2s$, we just take a difference; because we allow $s = 0$ in the definition, we have $D_{T,N}^{(2k)} \geq 0$. We take a difference rather than an absolute difference because we only need to ensure quantities such as $\frac{1}{N}\#\{1 \leq m\neq n \leq k\; :\; |x_m - x_n| < s/N\}$ are not too large, we do not need to ensure they are actually close to the expected value for a random sequence. Grepstad \& Larcher \cite{GL} discuss a notion similar to Poissonian pair discrepancy, but slightly different; we compare our notion to theirs in \S\ref{comparison_grepstad_larcher_section}. We can relate the correlation discrepancy to the sequence discrepancy as follows.

\begin{theorem}\label{discrepancy_seq_correlation}
For any sequence $(x_n)_{n=1}^{\infty}$, we have
\begin{equation}
    D_N \lesssim \frac{T}{N^k} + k\lp(\frac{k!}{T} + \frac{D^{(2k)}_{T,N}}{T}\rp)^{\frac{1}{2k}}.
\end{equation}
\end{theorem}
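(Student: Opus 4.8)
The plan is to combine the Erd\"os-Tur\'an inequality with a direct estimate on the exponential sums, where the closeness to $2k$-fold Poissonian correlation is packaged into the quantity $D^{(2k)}_{T,N}$. The key observation is that the $2k$-th power of a Weyl sum can be written as a correlation count. First I would expand
\[
    \sum_{m=1}^{M} \lp|\frac{1}{N}\sum_{n=1}^N e^{2\pi i m x_n}\rp|^{2k} = \frac{1}{N^{2k}}\sum_{m=1}^M \sum_{a_1,\ldots,a_k,b_1,\ldots,b_k} e^{2\pi i m (\sum x_{a_j} - \sum x_{b_j})},
\]
and then bound the geometric sum over $m$: for any real $\theta$, $\sum_{m=1}^M e^{2\pi i m\theta}$ has real part at most $\min(M, \frac{c}{\|\theta\|})$ for a universal constant, so summing over $m$ leaves us counting tuples with $\|\sum x_{a_j} - \sum x_{b_j}\|$ small — that is, at distance $\lesssim 1/M$ of an integer. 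Choosing $M = N^k/(4T)$, this count is controlled, up to the diagonal terms $\{a_j\} = \{b_j\}$, by $N^k(D^{(2k)}_{T,N} + 2T)$ plus lower-order error, with the crucial $1/T$ gain coming from the fact that we divided by $N^{2k}$ and multiplied by roughly $N^k$ tuples while the scale $1/M \sim T/N^k$ brings in the factor $T/N^k \cdot N^k \cdot N^{-2k} \cdot (\text{count})$; carefully the estimate becomes $\sum_{m\le M}|\cdots|^{2k} \lesssim \frac{k!}{T} + \frac{D^{(2k)}_{T,N}}{T}$, the $k!$ being the size of the diagonal contribution (the number of ways to match $\{b_j\}$ to $\{a_j\}$ as multisets). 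This is essentially the content of Lemma \ref{lem:weyl-sum-estimate} made quantitative.

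Next I would feed this into the discrepancy bound \eqref{discrepancy_estimate_exp_sum}, which is itself obtained from Erd\"os-Tur\'an by applying H\"older's inequality to the sum $\sum_{m=1}^{M'}\frac{1}{m}|\widehat{\mu_N}(m)|$ — splitting $\frac{1}{m} = \frac{1}{m}$ and using that $\sum 1/m^{2k/(2k-1)}$ converges, or more simply dyadically decomposing and applying the power-mean inequality on each block. Taking $M' = N^k$ in Erd\"os-Tur\'an gives the $T/N^k$ main term (after a further choice relating $M'$ and $T$), and the tail $\sum_{m\le M'}\frac{1}{m}|\widehat{\mu_N}(m)|$ is dominated by $k\big(\sum_{m\le N^k/(4T)}|\widehat{\mu_N}(m)|^{2k}\big)^{1/(2k)}$. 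Substituting the bound from the previous paragraph yields exactly
\[
    D_N \lesssim \frac{T}{N^k} + k\lp(\frac{k!}{T} + \frac{D^{(2k)}_{T,N}}{T}\rp)^{\frac{1}{2k}}.
\]

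The main obstacle I anticipate is the bookkeeping in the correlation-count step: one must handle the distinction between $|\sum x_{a_j} - \sum x_{b_j}|$ as a real number versus its distance to the nearest integer (the Weyl sum only sees the latter), track the diagonal $\{a_j\}=\{b_j\}$ contribution precisely enough to get the constant $k!$ rather than merely $O_k(1)$, and account for "near-diagonal" tuples where the $a_j$'s and $b_j$'s nearly but don't exactly match as multisets — these must be shown to be absorbed into the $D^{(2k)}_{T,N}$ term or into negligible error as $N\to\infty$. A secondary technical point is justifying the interchange and the geometric-sum bound uniformly, and making sure the step from $M = N^k/(4T)$ to the Erd\"os-Tur\'an cutoff is consistent so that the two occurrences of $T$ and of $N^k$ match up cleanly. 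Once these are handled, the inequality follows by straightforward substitution.
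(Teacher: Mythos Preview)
Your Erd\H{o}s--Tur\'an plus H\"older step is fine and matches the paper. The gap is in your correlation-count step. After expanding, you are effectively summing the Dirichlet-type kernel
\[
K(\theta)=\operatorname{Re}\sum_{m=1}^{M}e^{2\pi i m\theta}
\]
over all tuples $(a_1,\ldots,a_k,b_1,\ldots,b_k)$ with $\theta=\sum x_{a_j}-\sum x_{b_j}$. Your bound $K(\theta)\le\min(M,c/\|\theta\|)$ is correct, but when you then say this ``leaves us counting tuples with $\|\theta\|\lesssim 1/M$'' you have dropped the tail $c/\|\theta\|$ for $\|\theta\|>1/M$. That tail is not negligible: by a layer-cake or dyadic decomposition it forces you to control the off-diagonal counts $\#\{\{a_j\}\ne\{b_j\}:\|\theta\|\le t\}$ for \emph{all} scales $t$ up to $1/2$, whereas $D^{(2k)}_{T,N}$ only bounds those counts for $t\le T/N^{k}$. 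With $M=N^{k}/(4T)$ one has $1/M=4T/N^{k}$, so the entire tail lies outside the range where $D^{(2k)}_{T,N}$ says anything. There is no way to recover the missing information from $D^{(2k)}_{T,N}$ alone, so as written the argument does not close.

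The paper's fix is exactly to replace the Dirichlet kernel by a Fej\'er-type kernel: take the tent function $f(x)=\tfrac{1}{T^{2}}(T-|x|)\chi_{[-T,T]}$, so that $\hat f\ge 0$ everywhere and $\hat f(\xi)\ge 1/2$ for $|\xi|\le 1/(4T)$. Then the identity
\[
\sum_{m\in\ZZ}\hat f(m/N^{k})\Bigl|\tfrac{1}{N}\sum_{n}e^{2\pi i m x_n}\Bigr|^{2k}
=\frac{1}{N^{k}}\sum_{a,b}F_N\Bigl(\sum x_{a_j}-\sum x_{b_j}\Bigr)
\]
has on the right a weight $F_N$ that is genuinely supported in $[-T/N^{k},T/N^{k}]$, so the off-diagonal contribution is a correlation count at scales $\le T/N^{k}$ and is bounded by $1+\tfrac{1}{T}+\tfrac{D^{(2k)}_{T,N}}{T}$ via a step-function majorant. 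The nonnegativity of $\hat f$ lets you drop down to the unweighted sum over $m\le N^{k}/(4T)$, and the diagonal gives the $k!/T$ term. If you insert this one idea---weight the $m$-sum by a Fej\'er cutoff rather than a sharp cutoff---your outline goes through.
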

This explicit result shows that in the assumptions of Theorem \ref{kway}, we only need to assume Poissonian correlation (\ref{eq_2kfold_poissonian}) at integer values of $s$, not for all $s > 0$. We can also generalize Theorem \ref{discrepancy_seq_correlation} to deal with weak correlation. For any $0 \leq \alpha \leq k$, if 
\begin{equation}\label{assumption_general_alpha}
    \max_{0 < s < T} \frac{1}{N^{2k-\alpha}}\#\lp\{\substack{1 \leq a_j,b_j\leq N\\\{a_j\} \neq \{b_j\}} : \lp|\sum_{j=1}^k x_{a_j} - \sum_{j=1}^k x_{b_j}\rp| \leq \frac{s}{N^{\alpha}}\rp\} \leq 2s+F
\end{equation}
then
\begin{equation}\label{discrepancy_bound_seq_alpha}
    D_N \lesssim \frac{T}{N^{\alpha}} + k\lp(\frac{k!}{TN^{k-\alpha}} + \frac{F}{T}\rp)^{\frac{1}{2k}}.
\end{equation}
Consider the case $k = 1$, $\alpha = 0$; then $F$ is the discrepancy of the first $N^2 - N$ terms of the difference sequence $(x_m-x_n)_{1 \leq m\neq n \leq N}$, and we have 
\begin{equation*}
D_N \lesssim T + \frac{1}{\sqrt{T}}\lp(\frac{1}{N^2}+F\rp)^{1/2}.    
\end{equation*}
Because $F \geq 1/N^2$ this equation becomes $D_N \lesssim T + \sqrt{F/T}$. Optimizing for $T$ we obtain $D_N \lesssim F^{1/3}$, which is a quantitatively worse version of Theorem \ref{difference_discrepancy}. Indeed, this is Vinogradoff's 1926 bound \cite{V} for the same problem, which Cassels gave a simplified proof of \cite{CV}. Our proof seems to be different from both of theirs. 
\subsubsection{Comparison to other discrepancy bounds}
We can phrase Theorem \ref{discrepancy_seq_correlation} differently so as to more closely resemble similar inequalities in other papers. Suppose
\begin{equation*}
    \frac{1}{N^k}\#\lp\{\substack{1 \leq a_j,b_j \leq N\\\{a_j\} \neq \{b_j\}}\; :\; \lp|\sum_{j=1}^k x_{a_j} - \sum_{j=1}^k x_{b_j}\rp| < \frac{s}{N^k}\rp\} \leq (1+\delta)2s
\end{equation*}
for $s \in \{1,\ldots,T\}$. Then $\frac{D_{T,N}}{T} \leq \delta$, so by Theorem \ref{discrepancy_seq_correlation},
\begin{equation*}
    D_N \lesssim \frac{T}{N^k} + k^{3/2}T^{-\frac{1}{2k}} + \delta^{\frac{1}{2k}}
\end{equation*}
using $(a+b)^{\frac{1}{2k}} \leq a^{\frac{1}{2k}}+b^{\frac{1}{2k}}$ for $k \geq 1$. The case $k=1$ of pair correlation yields
\begin{equation}\label{discrepancy_delta_form_pair}
    D_N \lesssim \frac{T}{N} + \frac{1}{\sqrt{T}} + \sqrt{\delta}. 
\end{equation}
This form of the equation highlights that $\frac{D_{T,N}}{T}$ is a more fundamental quantity than $D_{T,N}$, and it is interesting to consider for fixed $\delta$ how our bound changes with $T$ and $N$. In the regime $\min\lp(\frac{1}{2}N^{2/5}, \frac{1}{\delta}\rp) \leq T \leq N^{2/5}$ we apply (\ref{discrepancy_delta_form_pair}) to obtain
\begin{equation*}
    D_N \lesssim N^{-1/5} + \sqrt{\delta}
\end{equation*}
which is Grepstad \& Larcher's discrepancy bound in  \cite{GL}. Their result has the advantage of being completely explicit in the constants. In the regime $T = 1/\delta$ we have
\begin{equation*}
    D_N \lesssim \frac{1}{N\delta} + \sqrt{\delta}
\end{equation*}
which refines Steinerberger's Theorem 1 in \cite{SSDisc}. We can use (\ref{discrepancy_delta_form_pair}) to answer Open Problem 1 from the same paper. Both Steinerberger's bound and Grepstad \& Larcher's bound only work for $T$ larger than some minimal value determined by $\delta$; motivated by this fact, Steinerberger asks for the minimal range $0 < s < T$ in which one needs to have control over the correlation discrepancy in order to control the sequence discrepancy. The answer is that there is no minimum range $T$; one immediately starts getting global regularity statements for any $T > 0$, all the way up to an optimal bound of $\sqrt{\delta}$ which is achieved for $T > 1/\delta$.
\subsubsection{Discrepancy bounds obtained for a random sequence}
As an example of the discrepancy bound in Theorem \ref{discrepancy_seq_correlation} we consider a sequence $(x_n)$ chosen uniformly and independently at random on $\TT$. Up to logarithmic factors we have $D^{(2k)}_{T,N} \approx T/N^k$, and this estimate yields 
\begin{equation}\label{random_seq_estimate_allk}
    D_N\lesssim k^{3/2}N^{-\frac{1}{2+1/k}}
\end{equation}
via Theorem \ref{discrepancy_seq_correlation}. Interestingly, this bound improves as $k$ increases. Up to logarithmic factors the discrepancy of a random sequence is $D_N \approx 1/\sqrt{N}$; so (\ref{random_seq_estimate_allk}) says that a sequence exhibiting Poissonian correlation of $2k$-fold differences for all $k$, and having correlation discrepancy comprable to a random sequence for all $k$, must have discrepancy close to that of a random sequence. 



\section{Proof of results}
\subsection{Proof of Theorem \ref{kway}}
Before proving that Poissonian correlation of $2k$-fold differences implies equidistribution, we state an equivalent property which we will use in our proof.
\begin{lemma}\label{lemma_integrate_test_function}
A sequence $(x_n)$ exhibits Poissonian correlation of $2k$-fold differences if and only if for all $f: \RR \to \RR$ piecewise $C^1$, compactly supported, and even we have
\begin{equation}\label{eq_testfunc_integrates}
    \lim_{N\to\infty} \frac{1}{N^{k}} \sum_{\substack{1\leq a_j,b_j\leq N\\\{a_1,\ldots,a_k\}\neq\{b_1,\ldots,b_k\}}} F_N\lp(\sum_{j=1}^k x_{a_j} - \sum_{j=1}^k x_{b_j}\rp) = \int_{-\infty}^{\infty} f(x)\ dx
\end{equation}
where
\begin{equation}\label{eq:FN_defn}
F_N(x) = \sum_{m\in\ZZ} f(N^k (x+m)).     
\end{equation} 
In addition, if $f$ is supported on $[-t,t]$, then we only need Poissonian correlation (\ref{eq_2kfold_poissonian}) to hold for all $0 < s < t$ in order for (\ref{eq_testfunc_integrates}) to hold.
\end{lemma}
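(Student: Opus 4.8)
The plan is to reduce everything to the case of indicator functions $f=\mathbf 1_{[-s,s]}$ and to exploit that, once $N^k$ exceeds twice the radius of $\mathrm{supp}\,f$, the periodization $F_N(x)=\sum_{m\in\ZZ}f(N^k(x+m))$ is, on a fundamental domain of $\TT$, nothing but the single rescaled bump $x\mapsto f(N^kx)$. Writing $\|y\|$ for the distance from $y$ to $0$ on $\TT$ (the meaning of $|\cdot|$ in (\ref{eq_2kfold_poissonian})), this makes $\frac{1}{N^k}\sum_{\{a_j\}\neq\{b_j\}}F_N(\sum_j x_{a_j}-\sum_j x_{b_j})$ equal, for $f=\mathbf 1_{[-s,s]}$ and $N$ large, to $\frac{1}{N^k}\#\{\{a_j\}\neq\{b_j\}:\|\sum_jx_{a_j}-\sum_jx_{b_j}\|\le s/N^k\}$, whose limit is asserted by (\ref{eq_2kfold_poissonian}) to be $2s=\int_\RR\mathbf 1_{[-s,s]}$ (the exclusion $\{a_j\}\neq\{b_j\}$ is identical on both sides and plays no role). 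Thus (\ref{eq_testfunc_integrates}) holds for indicators exactly when (\ref{eq_2kfold_poissonian}) does. For the implication ``(\ref{eq_testfunc_integrates}) for all test functions in the lemma $\Rightarrow$ (\ref{eq_2kfold_poissonian})'', I would, given $s,\varepsilon>0$, squeeze $\mathbf 1_{[-(s-\varepsilon),s-\varepsilon]}\le f^-\le\mathbf 1_{[-s,s]}\le f^+\le\mathbf 1_{[-(s+\varepsilon),s+\varepsilon]}$ by even, piecewise $C^1$, compactly supported $f^\pm$; for $N$ large one has $F_N^-\le\mathbf 1\{\|\cdot\|\le s/N^k\}\le F_N^+$ on $\TT$, so applying (\ref{eq_testfunc_integrates}) to $f^\pm$ traps both $\liminf$ and $\limsup$ of $\frac{1}{N^k}\#\{\cdots\}$ between $\int f^-\ge 2(s-\varepsilon)$ and $\int f^+\le 2(s+\varepsilon)$, and $\varepsilon\to0$ finishes it.

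For the converse, assume (\ref{eq_2kfold_poissonian}) and let $f$ be even, piecewise $C^1$, supported in $[-t,t]$. Evenness together with the fundamental theorem of calculus on $[0,\infty)$ gives a layer-cake representation $f(x)=\int_0^\infty\mathbf 1_{[-u,u]}(x)\,d\nu(u)$ with $d\nu=-f'(u)\,du$ (plus point masses at any jumps of $f$), a finite signed measure supported in $[0,t]$. Inserting this into $F_N$ and exchanging the finite $m$-sum with the $\nu$-integral, for $N^k>2t$ one gets $F_N(x)=\int_0^\infty\mathbf 1\{\|x\|\le u/N^k\}\,d\nu(u)$, hence
\[
\frac{1}{N^k}\sum_{\{a_j\}\neq\{b_j\}}F_N\Big(\sum_{j=1}^k x_{a_j}-\sum_{j=1}^k x_{b_j}\Big)=\int_0^\infty g_N(u)\,d\nu(u),
\]
where $g_N(u)=\frac{1}{N^k}\#\{\{a_j\}\neq\{b_j\}:\|\sum_jx_{a_j}-\sum_jx_{b_j}\|\le u/N^k\}$. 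By hypothesis $g_N(u)\to 2u$ for every $u\in(0,t)$; passing this limit under the integral yields $\int_0^\infty 2u\,d\nu(u)$, and an integration by parts (using evenness of $f$ and that it vanishes near $\pm t$) converts this into $2\int_0^\infty f=\int_\RR f$. Since only the values $g_N(u)$ with $u\in\mathrm{supp}\,\nu\subseteq[0,t]$ enter, the same computation proves the refinement that (\ref{eq_2kfold_poissonian}) at $0<s<t$ alone suffices.

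The one genuine obstacle is justifying the interchange of limit and integral, since we have no a priori uniform bound on the counting functions $g_N(\cdot)$. This is supplied by monotonicity: $u\mapsto g_N(u)$ is nondecreasing, so for any fixed $u_0$ lying beyond $\mathrm{supp}\,\nu$ but still in the range where (\ref{eq_2kfold_poissonian}) is known, $g_N(u)\le g_N(u_0)$ for $u\le u_0$ while $g_N(u_0)\to 2u_0<\infty$, whence $\sup_N g_N(u)<\infty$ uniformly on that range and dominated convergence applies against the finite measure $|\nu|$. The only bookkeeping point is that $\mathrm{supp}\,\nu$ should sit strictly inside that range, which holds automatically when $f$ vanishes on a neighborhood of $\pm t$; the general case (for instance $f=\mathbf 1_{[-t,t]}$ itself) is recovered by first approximating $f$ below and above by such functions, or equivalently by enlarging $t$ by an arbitrarily small amount.
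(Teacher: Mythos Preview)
The paper does not prove this lemma; it simply calls the proof ``standard''. Your argument supplies exactly that standard proof: reduce to indicators via the layer-cake identity $f(x)=\int_0^\infty\mathbf{1}_{[-u,u]}(x)\,d\nu(u)$, swap the (finite) index sum with the $\nu$-integral, and pass to the limit by dominated convergence, using monotonicity of $u\mapsto g_N(u)$ together with $g_N(u_0)\to 2u_0$ at some $u_0$ beyond $\mathrm{supp}\,\nu$ to manufacture the dominating constant. The squeeze for the reverse direction is also fine.

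Your caveat about the addendum is well taken. When Poissonian correlation is assumed for \emph{all} $s>0$, one may always choose the dominating point $u_0>t$, so ``enlarging $t$'' is a genuine fix and the main equivalence is complete. But for the refined claim as literally written---only $0<s<t$ assumed---neither of your proposed repairs closes the gap when $\mathrm{supp}\,\nu$ reaches all the way to $t$ (as it does for the tent $f(x)=\tfrac{1}{t^2}(t-|x|)_+$ actually used later, and for $f=\mathbf{1}_{[-t,t]}$): there is no $u_0<t$ dominating $g_N$ on all of $[0,t]$, and one cannot approximate such an $f$ from \emph{above} by a function supported strictly inside $(-t,t)$. This is a small imprecision in the paper's phrasing rather than a defect in your reasoning; reading the addendum with $0<s\le t$ (which is how the quantitative results in \S\ref{section_discrepancy_bounds} are set up, the maximum there running over $s\in\{0,\dots,T\}$ inclusive) lets your argument go through verbatim with $u_0=t$.
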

The proof of this Lemma is standard, and in the case $k=1$ this is sometimes taken as the definition of Poissonian pair correlation. In particular, if $f = \chi_{[-s,s]}$, then Equation (\ref{eq_testfunc_integrates}) is the same as Equation (\ref{eq_2kfold_poissonian}) in the definition of $2k$-fold correlation. 
We now prove Theorem \ref{kway}.
\begin{proof}
Let $f$ be an arbitrary even, piecewise $C^1$ function compactly supported on $[-t,t]$ with $\int_{-\infty}^{\infty} f(x)dx = 1$. Let
\begin{equation}\label{equation_correlation_function}
\begin{aligned}
A^{(N)}(f) &= \frac{1}{N^k} \sum_{1\leq a_j,b_j\leq N} F_N\lp(\sum_{j=1}^k x_{a_j} - \sum_{j=1}^k x_{b_j}\rp), \\
R^{(N)}(f) &= \frac{1}{N^k} \sum_{\substack{1\leq a_j,b_j\leq N\\\{a_1,\ldots,a_k\}\neq \{b_1,\ldots,b_k\}}} F_N\lp(\sum_{j=1}^k x_{a_j} - \sum_{j=1}^k x_{b_j}\rp)
\end{aligned} 
\end{equation}
with $F_N$ as in (\ref{eq:FN_defn}).
We have
\begin{equation}\label{first_eq_FN}
    A^{(N)}(f) = \frac{Cf(0)}{N^k} + R^{(N)}(f)
\end{equation}
where $C = \#\{1\leq a_j,b_j\leq N : \{a_1,\ldots,a_k\} = \{b_1,\ldots,b_k\}\}$. Notice $C \leq k!N^k$. We can compute the quantity $A^{(N)}(f)$ in another way using Fourier series. We have
\begin{align*}
    \hat{F}_N(m) &= \int_{\TT} F_N(x) e^{-2\pi i m x}\ dx \\ 
    &= \int_{\TT} \sum_{c\in \ZZ} f(N^k(x+c))e^{-2\pi i m x}\ dx\\
    &= \frac{1}{N^k} \int_{\RR} f(y) e^{-2\pi i m y / N^k}\ dy \\
    &= \frac{1}{N^k} \hat{f}(m/N^k)
\end{align*}
so
\begin{align*}
    A^{(N)}(f) &= \frac{1}{N^k}\sum_{1 \leq a_j,b_j \leq N} \sum_{m\in \ZZ} \hat F_N(m) \exp \left(2\pi i m \left(\sum_{j=1}^k x_{a_j} - \sum_{j=1}^k x_{b_j}\right)\right) \\ 
    &= \frac{1}{N^{2k}} \sum_{m\in \ZZ} \hat{f}(m/N^k)\lp|\sum_{n=1}^N e^{2\pi i m x_n}\rp|^{2k} \\ 
    &= 1+\sum_{m\in \ZZ\setminus\{0\}}\hat{f}(m/N^k)\lp|\frac{1}{N}\sum_{n=1}^N e^{2\pi i m x_n}\rp|^{2k}.
\end{align*}
Substituting into Equation (\ref{first_eq_FN}) we get
\begin{equation}\label{key_ineq}
    2\sum_{m=1}^{\infty}\hat{f}(m/N^k)\lp|\frac{1}{N}\sum_{n=1}^N e^{2\pi i m x_n}\rp|^{2k} \leq k!f(0) + R^{(N)}(f) - 1
\end{equation}
using $C \leq k!N^k$. Using the characterization of Poissonian correlation of $2k$-fold differences from Lemma \ref{lemma_integrate_test_function}, we have $\lim_{N\to\infty} R^{(N)}(f) = 1$, so
\begin{equation}\label{function_inequality_k_corr}
    \limsup_{N\to\infty} \sum_{m=1}^{\infty}\hat{f}(m/N^k)\lp|\frac{1}{N}\sum_{n=1}^N e^{2\pi i m x_n}\rp|^{2k} \leq \frac{k!f(0)}{2}.
\end{equation}
Now we will choose $f$ carefully so we can obtain estimates on the Weyl sums from Lemma \ref{lem:weyl-sum-estimate}. Let $g(x) = \frac{1}{t}\chi_{[-t/2,t/2]}$, and $f = g * g$. Then 
\begin{equation*}
    \hat{g}(m) = \frac{\sin(\pi m t)}{\pi m t},\quad \hat{f}(m) = |\hat{g}(m)|^2 = \lp(\frac{\sin(\pi m t)}{\pi m t}\rp)^2
\end{equation*}
Explicitly, $f = \frac{1}{t^2}(t-|x|)\chi_{[-t,t]}$. Notice that $\hat{f}(m) \geq 0$, and $\hat{f}(m) \geq 1/2$ for $m < \frac{1}{4t}$. It follows that
\begin{equation*}
    \frac{1}{2}\sum_{m=1}^{N^k/(4t)} \lp|\frac{1}{N}\sum_{n=1}^N e^{2\pi i m x_n}\rp|^{2k} \leq \sum_{m=1}^{\infty} \hat{f}(m/N^k) \lp|\frac{1}{N}\sum_{n=1}^N e^{2\pi i m x_n}\rp|^{2k}
\end{equation*}
and we may apply inequality (\ref{function_inequality_k_corr}) to obtain Lemma \ref{lem:weyl-sum-estimate}
\begin{equation*}
    \limsup_{N\to\infty} \sum_{m=1}^{N^k/(4t)} \lp|\frac{1}{N}\sum_{n=1}^N e^{2\pi i m x_n}\rp|^{2k} \leq \frac{k!}{t}.
\end{equation*}
It follows that
\begin{equation*}
    \limsup_{N\to\infty}\lp|\frac{1}{N}\sum_{n=1}^N e^{2\pi i m x_n}\rp|^{2k} \leq \limsup_{N\to\infty} \sum_{m=1}^{N^k/(4t)} \lp|\frac{1}{N}\sum_{n=1}^N e^{2\pi i m x_n}\rp|^{2k} \leq \frac{k!}{t}\quad \text{for all } m > 0.
\end{equation*}
This equation holds for all $t > 0$, so we have 
\begin{equation*}
    \lim_{N\to\infty}\lp|\frac{1}{N}\sum_{n=1}^N e^{2\pi i m x_n}\rp|^{2k} = 0\quad \text{for all } m > 0,
\end{equation*}
thus
\begin{equation*}
    \lim_{N\to\infty}\lp|\frac{1}{N}\sum_{n=1}^N e^{2\pi i m x_n}\rp| = 0\quad \text{for all } m > 0
\end{equation*}
and by Weyl's theorem on equidistribution, $(x_n)$ is equidistributed on $\TT$. 
\end{proof}
Theorem \ref{poissonian_pair} is a special case of this result for $k=1$. In particular, our proof reduces to Steinerberger's proof of Theorem \ref{poissonian_pair} in \cite{SSHighDim} for the case $k=1$ . We did not use the full strength of Poissonian correlation in this proof; we only used the fact that for all $s > 0$,
\begin{equation*}
    \limsup_{N\to\infty}\frac{1}{N^k}\#\lp\{\substack{1 \leq a_j,b_j \leq N\\\{a_j\} \neq \{b_j\}}\; :\; \lp|\sum_{j=1}^k x_{a_j} - \sum_{j=1}^k x_{b_j}\rp| < \frac{s}{N^k}\rp\} \leq 2s.
\end{equation*}
This equation can be read as saying that not too many $2k$-fold differences are too close to 0. We show in \S\ref{section_discrepancy_bounds} that this equation only needs to hold for integer values of $s$, not all $s > 0$. Steinerberger derived Lemma \ref{lem:weyl-sum-estimate} in the case $k=1$, and argued that this estimate on the Weyl sums shows Poissonian pair correlation is a very strong property. In \S\ref{section_discrepancy_bounds} we bolster his claim by using Lemma \ref{lem:weyl-sum-estimate} to prove a discrepancy bound on the original sequence.

\subsection{Proof of Theorem \ref{kway_alpha}}
We now prove that Poissonian correlation of $2k$-fold differences at rate $\alpha$ implies equidistribution. We first modify Lemma \ref{lemma_integrate_test_function} for the case of weak correlation.
\begin{lemma}\label{lemma_integrate_test_function_weak}
A sequence $(x_n)$ exhibits weak Poissonian correlation of $2k$-fold differences at rate $\alpha$ if and only if for all $f: \RR \to \RR$ piecewise $C^1$, compactly supported, and even we have
\begin{equation}\label{eq_testfunc_integrates_weak}
    \lim_{N\to\infty} \frac{1}{N^{2k-\alpha}} \sum_{\substack{1\leq a_j,b_j\leq N\\\{a_1,\ldots,a_k\}\neq\{b_1,\ldots,b_k\}}} F_N\lp(\sum_{j=1}^k x_{a_j} - \sum_{j=1}^k x_{b_j}\rp) = \int_{-\infty}^{\infty} f(x)\ dx
\end{equation}
where
\begin{equation*}
F_N(x) = \sum_{m\in\ZZ} f(N^{\alpha} (x+m)).     
\end{equation*} 
In addition, if $f$ is supported on $[-t,t]$, then we only need weak correlation (\ref{eq_poissonian_rate_alpha_kfold}) to hold for all $0 < s < t$ in order for Equation (\ref{eq_testfunc_integrates_weak}) to hold.
\end{lemma}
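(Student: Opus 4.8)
The plan is to run the standard argument behind Lemma~\ref{lemma_integrate_test_function}, with the dilation $N^k$ replaced by $N^\alpha$ and the normalization replaced by $N^{2k-\alpha}$. Throughout write $D_{a,b}=\sum_{j=1}^k x_{a_j}-\sum_{j=1}^k x_{b_j}\in\TT$, and for $u>0$ and $N$ large enough that $u/N^\alpha<1/2$ set
\[
\phi_N(u)=\frac{1}{N^{2k-\alpha}}\#\lp\{\substack{1\le a_j,b_j\le N\\\{a_j\}\neq\{b_j\}}\;:\;|D_{a,b}|\le \tfrac{u}{N^\alpha}\rp\},
\]
which is nondecreasing in $u$; the hypothesis (\ref{eq_poissonian_rate_alpha_kfold}) says precisely that $\phi_N(u)\to 2u$ as $N\to\infty$ for every $u>0$ (and under the weaker assumption, for every $0<u<t$).

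For the forward direction I fix an even, piecewise $C^1$, compactly supported $f$ with support in $[-t,t]$; splitting off the finitely many, symmetric jumps reduces us to $f$ continuous, so that $f(\pm t)=0$. The key idea is to expand $f$ in the ``basis'' of symmetric indicators: by the fundamental theorem of calculus $f(x)=-\int_0^t f'(u)\,\chi_{[-u,u]}(x)\,du$ for $x\ge 0$, and by evenness this holds for all $x\in\RR$. Substituting $x=N^\alpha(y+m)$ and summing over $m\in\ZZ$ (the $m$-sum is finite for each $y,N$, so the interchange is trivial) gives
\[
F_N(y)=-\int_0^t f'(u)\,G^u_N(y)\,du,\qquad G^u_N(y):=\sum_{m\in\ZZ}\chi_{[-u,u]}\lp(N^\alpha(y+m)\rp),
\]
and for $N$ large $G^u_N(D_{a,b})=\chi_{\{|D_{a,b}|\le u/N^\alpha\}}$. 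Hence the left side of (\ref{eq_testfunc_integrates_weak}) equals $-\int_0^t f'(u)\,\phi_N(u)\,du$. I then pass the limit inside the integral using $\phi_N(u)\to 2u$ pointwise, obtaining $-\int_0^t 2u\,f'(u)\,du=2\int_0^t f(u)\,du=\int_{-\infty}^\infty f$, which is exactly (\ref{eq_testfunc_integrates_weak}). Since only the values $\phi_N(u)$ with $0<u<t$ enter, this simultaneously yields the stated refinement about $0<s<t$.

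The converse is a routine squeeze. Given $s>0$ and $\varepsilon>0$, choose even continuous piecewise-linear functions with $f^-_\varepsilon\le\chi_{[-s,s]}\le f^+_\varepsilon$, supported in $[-s-\varepsilon,s+\varepsilon]$, and $\int f^-_\varepsilon=2s-\varepsilon$, $\int f^+_\varepsilon=2s+\varepsilon$. For $N$ large the corresponding periodizations satisfy $F^-_{\varepsilon,N}(y)\le\chi_{\{|y|\le s/N^\alpha\}}\le F^+_{\varepsilon,N}(y)$ pointwise, so $\phi_N(s)$ lies between $\tfrac{1}{N^{2k-\alpha}}\sum_{\{a_j\}\neq\{b_j\}}F^-_{\varepsilon,N}(D_{a,b})$ and $\tfrac{1}{N^{2k-\alpha}}\sum_{\{a_j\}\neq\{b_j\}}F^+_{\varepsilon,N}(D_{a,b})$; applying (\ref{eq_testfunc_integrates_weak}) to $f^\pm_\varepsilon$ and letting $\varepsilon\to 0$ forces $\phi_N(s)\to 2s$, which is (\ref{eq_poissonian_rate_alpha_kfold}).

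The one genuine technical point is justifying the interchange of limit and integral in the forward direction: a priori $\phi_N(u)$ could be as large as $N^\alpha$, so a uniform-in-$N$ bound is not automatic and must be extracted from the pointwise convergence. Since each $\phi_N$ is monotone in $u$ and the limit $u\mapsto 2u$ is continuous, P\'olya's theorem (a sequence of monotone functions converging pointwise to a continuous limit converges uniformly on compact sets) gives uniform boundedness of $\{\phi_N\}$ on compact subsets of $(0,t)$; combined with absolute continuity of $u\mapsto\int_0^u|f'|$ near the endpoints $0$ and $t$ (using $f(\pm t)=0$), this supplies a dominating function and lets dominated convergence run. I expect this to be the main obstacle; the remainder is bookkeeping essentially identical to the $\alpha=k$ case recorded in Lemma~\ref{lemma_integrate_test_function}.
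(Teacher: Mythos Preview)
Your argument is the standard one and matches what the paper intends; the paper itself gives no proof of this lemma (nor of Lemma~\ref{lemma_integrate_test_function}), merely declaring the argument ``standard,'' so your write-up is in fact more detailed than the paper's.

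There is one small gap to tighten, in the refinement clause. Under only the hypothesis $\phi_N(s)\to 2s$ for $0<s<t$ (strict), P\'olya's theorem gives uniform boundedness of $\phi_N$ on compact subsets of $(0,t)$, but \emph{not} on any neighbourhood of $t$: nothing prevents $\phi_N(u)$ from being of order $N^\alpha$ on an interval $(t-\varepsilon_N,t)$ with $\varepsilon_N\to 0$. In that situation the tail $\int_{t-\delta}^t f'(u)\phi_N(u)\,du$ is not controlled by the smallness of $\int_{t-\delta}^t|f'|$ alone, so the ``absolute continuity of $u\mapsto\int_0^u|f'|$'' does not supply a dominating function as you claim. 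Concretely, for the triangle $f(x)=\tfrac{1}{t^2}(t-|x|)\chi_{[-t,t]}$ used in the paper, a monotone family with $\phi_N(u)=2u$ for $u\le t-1/N$ and $\phi_N(u)=2u+N$ for $t-1/N<u<t$ would give $-\int_0^t f'(u)\phi_N(u)\,du\to 1+1/t^2\neq 1$. The fix is easy---either read the hypothesis as $0<s\le t$, or (equivalently, and harmlessly for every application in the paper) take $f$ supported in $(-t,t)$ strictly---but the justification you sketch does not close the gap as written.
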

We now prove Theorem \ref{kway_alpha}.
\begin{proof}
The setup is identical to the proof of Theorem \ref{kway}. Let $f$ be a piecewise $C^1$ function, even and compactly supported on $[-t,t]$. Analogously to (\ref{first_eq_FN}), we let
\begin{align*}
    A^{(N)}(f) &= \frac{1}{N^{2k-\alpha}} \sum_{1\leq a_j,b_j\leq N} F_N\lp(\sum_{j=1}^k x_{a_j} - \sum_{j=1}^k x_{b_j}\rp), \\
    R^{(N)}(f) &= \frac{1}{N^{2k-\alpha}} \sum_{\substack{1\leq a_j,b_j\leq N\\\{a_1,\ldots,a_k\}\neq \{b_1,\ldots,b_k\}}} F_N\lp(\sum_{j=1}^k x_{a_j} - \sum_{j=1}^k x_{b_j}\rp). 
\end{align*}
and obtain
\begin{equation*}
    A^{(N)}(f) = \frac{Cf(0)}{N^{2k-\alpha}} + R^{(N)}(f).
\end{equation*}
Next, we Fourier expand to obtain
\begin{equation*}
    A^{(N)}(f) = 1+\sum_{m\in \ZZ\setminus\{0\}}\hat{f}(m/N^\alpha)\lp|\frac{1}{N}\sum_{n=1}^N e^{2\pi i m x_n}\rp|^{2k}.
\end{equation*}
Combining these, we have
\begin{equation*}
    2\sum_{m=1}^{\infty}\hat{f}(m/N^\alpha)\lp|\frac{1}{N}\sum_{n=1}^N e^{2\pi i m x_n}\rp|^{2k} \leq \frac{k!f(0)}{N^{k-\alpha}} + R^{(N)}(f) - 1
\end{equation*}
where we use $C \leq k!N^k$. By Lemma \ref{lemma_integrate_test_function_weak}, $\lim_{N\to\infty} R^{(N)}(f) = 1$, so we can take a limit to obtain
\begin{equation*}
    \limsup_{N\to\infty}\sum_{m=1}^{\infty}\hat{f}(m/N^\alpha)\lp|\frac{1}{N}\sum_{n=1}^N e^{2\pi i m x_n}\rp|^{2k} \leq \limsup_{N\to \infty}\frac{k!f(0)}{2N^{k-\alpha}}.
\end{equation*}
For $f = \frac{1}{t^2}(t-|x|)\chi_{[-t,t]}$ as in the proof of Theorem \ref{kway}, we have
\begin{equation*}
    \limsup_{N\to\infty}\sum_{m=1}^{N^{\alpha}/(4t)}\lp|\frac{1}{N}\sum_{n=1}^N e^{2\pi i m x_n}\rp|^{2k} \leq \limsup_{N\to \infty}\frac{k!}{tN^{k-\alpha}}
\end{equation*}
and it follows that
\begin{equation}\label{ineq_with_alpha}
    \limsup_{N\to\infty} \lp|\frac{1}{N}\sum_{n=1}^N e^{2\pi i m x_n}\rp|^{2k} \leq \limsup_{N\to \infty}\frac{k!}{tN^{k-\alpha}}\quad \text{for all } m > 0.
\end{equation}
If $\alpha < k$, then we only need this equation to hold for some fixed $t > 0$ in order to conclude
\begin{equation*}
    \lim_{N\to\infty} \lp|\frac{1}{N}\sum_{n=1}^N e^{2\pi i m x_n}\rp|^{2k} = 0\quad \text{for all } m > 0,
\end{equation*}
which implies by Weyl's theorem that the sequence is equidistributed. If $\alpha = k$, then the right hand side of inequality (\ref{ineq_with_alpha}) is only small for large values of $t$. So for $\alpha < k$, in order to conclude equidistribution we need to have weak correlation (\ref{eq_poissonian_rate_alpha_kfold}) for all values $0 < s < t$, for some $t > 0$. For $\alpha = k$, we need to have correlation at all values $s > 0$ in (\ref{eq_poissonian_rate_alpha_kfold}).
\end{proof}

The proof of Theorem \ref{kway_alpha} illustrates that in order to conclude equidistribution, we need to control correlations at large enough distance scales. The right hand side of inequality (\ref{ineq_with_alpha}) is small if the scale we consider is large; for $\alpha = k$ the scale is large only when $t$ is large, but for $\alpha < k$ the scale is large even for $t$ small. Notice that for $\alpha = 0$, we have proved Weyl's result that if the sequence of differences $(x_m - x_n)$ is equidistributed then the original sequence $(x_n)$ is equidistributed. 

\section{Explicit Discrepancy Bounds}\label{section_discrepancy_bounds}
\subsection{Proof of discrepancy estimates}
We prove the discrepancy bound in Equation (\ref{discrepancy_estimate_exp_sum}).
\begin{proof}
By the Erd\"os-Tur\'an inequality (\ref{erdos-turan}), we have
\begin{equation*}
    D_N \lesssim \frac{1}{M}+\sum_{m=1}^M \frac{1}{m}\lp|\frac{1}{N}\sum_{n=1}^N e^{2\pi i m x_n}\rp|\quad \text{for all } M > 0.
\end{equation*}
Applying H\"older's inequality above with $p = 2k$, $q = \frac{2k}{2k-1}$ we obtain
\begin{equation*}
    \sum_{m=1}^M \frac{1}{m}\lp|\frac{1}{N}\sum_{n=1}^N e^{2\pi i m x_n}\rp| \leq \lp(\sum_{m=1}^M\lp|\frac{1}{N}\sum_{n=1}^N e^{2\pi i m x_n}\rp|^{2k}\rp)^{\frac{1}{2k}}\lp(\sum_{m=1}^M m^{-\frac{2k}{2k-1}}\rp)^{\frac{2k-1}{2k}}
\end{equation*}
Using an integral estimate, 
\begin{equation*}
    \sum_{m=1}^M m^{-\frac{2k}{2k-1}} \leq 1 + \int_1^{\infty} x^{-\frac{2k}{2k-1}}\ dx = 2k
\end{equation*}
So 
\begin{equation*}
    D_N \lesssim \frac{1}{M}+k\lp(\sum_{m=1}^M\lp|\frac{1}{N}\sum_{n=1}^N e^{2\pi i m x_n}\rp|^{2k}\rp)^{\frac{1}{2k}}\quad \text{for all } M > 0.
\end{equation*}
Letting $M = N^k/(4T)$ we obtain Equation (\ref{discrepancy_estimate_exp_sum}).
\end{proof}
We now prove Theorem \ref{discrepancy_seq_correlation}, providing a bound on the discrepancy of a sequence in terms of the correlation discrepancy. 
\begin{proof}
First we prove a quantitative version of Lemma \ref{lemma_integrate_test_function}, obtaining a quantative bound on the correlation of a test function. Let $f(x) = \frac{1}{T^2}\lp(T - |x|\rp)\chi_{[-T,T]}$ as in the proof of Theorem \ref{kway}, where $T > 0$ is an integer. Then we show
\begin{equation}\label{inequality_integrate_func}
    R^{(N)}(f) \leq 1 + \frac{1}{T} + \frac{D_{T,N}}{T}.
\end{equation}
with $R^{(N)}(f)$ as in (\ref{equation_correlation_function}). Let $g(x)$ be the step function
\begin{equation*}
    g(x) = \frac{1}{T^2}\lp(\chi_{[-T,T]} + \chi_{[-(T-1),T-1]} + \cdots + \chi_{[-1,1]}\rp),
\end{equation*}
then 
\begin{equation*}
    R^{(N)}(g) = \frac{1}{T^2}\sum_{s=1}^T P_{s,N}
\end{equation*}
where 
\begin{equation*}
    P_{s,N} = \frac{1}{N^k}\#\lp\{\substack{1 \leq a_j,b_j\leq N\\\{a_j\}\neq \{b_j\}}\; :\; \lp|\sum_j x_{a_j} - \sum_j x_{b_j}\rp| < \frac{s}{N^k}\rp\},
\end{equation*}
and we have $P_{s,N} \leq 2s + D_{T,N}$. We have $\frac{1}{T^2}\sum_{s=1}^T2s = \int_{-\infty}^{\infty} g(x)dx = 1 + \frac{1}{T}$, so
\begin{equation*}
    R^{(N)}(g) \leq 1 + \frac{1}{T} + \frac{D_{T,N}}{T}.
\end{equation*}
Because $f \leq g$, we also have $F_N \leq G_N$, and we obtain (\ref{inequality_integrate_func}) from the above. Substituting this bound in (\ref{key_ineq}) we obtain
\begin{equation*}
    \sum_{m=1}^{N^k/(4t)}\lp|\frac{1}{N}\sum_{n=1}^N e^{2\pi i m x_n}\rp|^{2k} \leq \frac{k!}{T} + \frac{1}{T} + \frac{D_{T,N}}{T}
\end{equation*}
plugging this inequality into Equation (\ref{discrepancy_estimate_exp_sum}), we obtain
\begin{equation*}
    D_N \lesssim \frac{T}{N^k} + k\lp(\frac{k!}{T} + \frac{D_{T,N}}{T}\rp)^{\frac{1}{2k}}
\end{equation*}
as desired.
\end{proof}
Equation (\ref{discrepancy_bound_seq_alpha}) generalizes this bound to all $0 \leq \alpha \leq k$; here we assume the correlation discrepancy bound holds for all $0 < s < T$, rather than just integer values of $s$. 
\begin{proof}
We prove that under the assumption (\ref{assumption_general_alpha}), $R^{(N)}(f) \leq 1 + \frac{F}{T}$. For any $L > 0$ an integer, let $c = T/L$. Let $g(x)$ be the step function
\begin{equation*}
    g(x) = \frac{c}{T^2}\lp(\chi_{[-c,c]} + \chi_{[-2c,2c]} + \cdots + \chi_{[-Lc,Lc]}\rp)
\end{equation*}
Then analogously to before, $R^{(N)}(g) \leq 1 + \frac{c}{T} + \frac{F}{T}$. This equation holds for any $L > 0$, so $R^{(N)}(g) \leq 1 + \frac{F}{T}$ and so it follows that
\begin{equation*}
    \sum_{m=1}^{N^{\alpha}/(4t)}\lp|\frac{1}{N}\sum_{n=1}^N e^{2\pi i m x_n}\rp| \leq \frac{k!}{TN^{k-\alpha}} + \frac{F}{T}.
\end{equation*}
Applying the Erd\"os Turan inequality with $M = N^{\alpha}/(4T)$, we obtain
\begin{equation*}
    D_N \lesssim \frac{T}{N^{\alpha}} + k\lp(\frac{k!}{TN^{k-\alpha}} + \frac{F}{T}\rp)^{\frac{1}{2k}}
\end{equation*}
as desired.
\end{proof}

\subsection{Comparison with Grepstad and Larcher's discrepancy bound}\label{comparison_grepstad_larcher_section}
For $(x_1,\ldots,x_N)$ a collection of points on $\TT$, let
\begin{equation*}
    F_{T,N} = \max_{s=1,\ldots,T} \lp|\frac{1}{2s}\#\lp\{1 \leq m \neq n \leq N\ :\ |x_m - x_n| \leq \frac{s}{N}\rp\} - N\rp|.
\end{equation*}
Grepstad \& Larcher \cite{GL} prove the discrepancy bound 
\begin{equation*}
    D_N \lesssim N^{-1/5}+ \sqrt{\frac{F_{T^2,N}}{N}}
\end{equation*}
in the regime
\begin{equation*}
    \min\lp(\frac{1}{2}N^{2/5},\frac{N}{F_{T^2,N}}\rp) \leq T \leq N^{2/5}.
\end{equation*}
We have $F_{T,N} \geq \frac{ND_{T,N}}{T}$, so our bound from Theorem \ref{discrepancy_seq_correlation} yields in the case of Pair correlation 
\begin{equation*}
    D_N \lesssim \frac{T}{N} + \frac{1}{\sqrt{T}} + \sqrt{\frac{F_{T,N}}{N}}
\end{equation*}
In the regime Grepstad \& Larcher discuss, $\frac{T}{N} \leq N^{-1/5}$ and $\frac{1}{\sqrt{T}} \lesssim \max\lp(N^{-1/5}, \sqrt{\frac{F_{T,N}}{N}}\rp)$. It follows from Theorem \ref{discrepancy_seq_correlation} that $D_N \lesssim N^{-1/5} + \sqrt{\frac{F_{T,N}}{N}}$ which is Grepstad \& Larcher's bound.

\subsection{Comparison with Steinerberger's discrepancy bound} 
In Theorem 1 of \cite{SSDisc}, Steinerberger proves that if for some $0 < \delta < 1/2$ a set of points $\{x_1,\ldots,x_N\}$ satisfies
\begin{equation*}
    \frac{1}{N}\#\lp\{1 \leq m \neq n \leq N : |x_m-x_n| \leq \frac{s}{N}\rp\} \leq (1+\delta)2s\quad \text{ for all } 1 \leq s \leq (8/\delta)\sqrt{\log N},
\end{equation*}
then $D_N \lesssim \delta^{1/3} + N^{-1/3}\delta^{-1/2}$. In this situation we have by (\ref{discrepancy_delta_form_pair})
\begin{equation*}
    D_N\lesssim \frac{T}{N} + \frac{1}{\sqrt{T}} + \sqrt{\delta},
\end{equation*}
and letting $T = \frac{1}{\delta} < (8/\delta)\sqrt{\log N}$ we obtain $D_N \lesssim \frac{1}{N\delta} + \sqrt{\delta}$. For $N > \delta^{-3/4}$, $\frac{1}{N\delta}\leq N^{-1/3}\delta^{-1/2}$ so our bound is stronger. For $N < \delta^{-3/2}$, $N^{-1/3}\delta^{-1/2} > 1$ and Steinerberger's bound is trivial. Because $\delta^{-3/2} > \delta^{-3/4}$, we have 
\begin{equation*}
    \frac{1}{N\delta} + \sqrt{\delta} \lesssim \delta^{1/3} + N^{-1/3}\delta^{-1/2}
\end{equation*}
and (\ref{discrepancy_delta_form_pair}) refines Steinerberger's bound.

\subsection{Explicit discrepancy bounds for a random sequence}
We give a proof sketch for Equation (\ref{random_seq_estimate_allk}), which is the optimal discrepancy bound one can obtain from Theorem \ref{discrepancy_seq_correlation} for a sequence having $2k$-fold correlation discrepancy similar to that of a random sequence. 
\begin{proof}
Let $(x_n)$ be a sequence of points chosen uniformly and indepenently at random on $\TT$. Then up to logarithmic a factor,
\begin{equation*}
    D_{T,N}^{(2k)} \lesssim \sigma(X)
\end{equation*}
where $\sigma(X)$ denotes the standard deviation, and $X$ is a binomial random variable with a probability $p = \frac{2T}{N^k}$ of taking the value $\frac{1}{N^k}$, $1-p$ of value 0, and $N^{2k}$ samples. We have 
\begin{equation*}
    \sigma^2(X) = N^{2k}\frac{1}{N^{2k}}p(1-p) \approx \frac{2T}{N^k}
\end{equation*}
So up to logarithmic factors, $D_{T,N}^{(2k)} \lesssim \sqrt{T/N^k}$. Let $T = N^{\beta}$, with $\beta$ to be chosen later. Then we have
\begin{equation*}
    D_N \lesssim N^{\beta - k} + k\lp(k!N^{-\beta} + N^{-k/2}T^{-\beta/2}\rp)^{\frac{1}{2k}} \lesssim N^{\beta-k} + k^{3/2}N^{-\frac{\beta}{2k}} + kN^{-\frac{1+\beta/k}{4}}
\end{equation*}
where we use $(k!)^{\frac{1}{2k}} \lesssim k^{1/2}$. 
The minimum of these three terms is largest asymptotically when $\beta - k = -\frac{\beta}{2k}$, or when $\beta = \frac{k}{1+1/(2k)}$. In this case $\beta - k = -\frac{1}{2+1/k}$, so we obtain $D_N\lesssim k^{3/2}N^{-\frac{1}{2+1/k}}$ up to logarithmic factors, as desired.    
\end{proof}

\section{Examples}\label{section-examples}
We exhibit some examples differentiating Poissonian pair correlation from higher order correlation. The two concepts are incomparable: the property of being Poissonian pair correlated does not imply four-fold correlation, and having four-fold correlation does not imply Poissonian pair correlation. 

\begin{example}\label{pair_not_fourfold}
Let 
\begin{equation*}
(y_n) = (x_1, -x_1, x_2, -x_2, \ldots)    
\end{equation*}
where $x_n$ is chosen uniformly and independently at random on $\TT$. Then $(y_n)$ almost surely exhibits Poissonian pair correlation, but does not exhibit Poissonian correlation of four-fold differences. This example illustrates how correlation of $2k$-fold differences is a global notion, whereas Poissonian pair correlation is local

\begin{proof}
First we show $(y_n)$ does not have Poissonian correlation of four-fold differences. Indeed, we have
\begin{equation*}
    \#\lp\{\substack{1 \leq m,n,k,l \leq N\\\{m,n\} \neq \{k,l\}}\; :\; \lp|y_m + y_n - y_k - y_l\rp| = 0\rp\} \geq N^2/4-N/2
\end{equation*}
because for any we have $1 \leq a\neq b\leq N/2$,
\begin{equation*}
    y_{2a} + y_{2a+1} - y_{2b} - y_{2b+1} = x_a - x_a + x_b - x_b = 0
\end{equation*}
and there are $N^2/4 - N/2$ such choices of $a$, $b$. Notice that the obstruction to $(y_n)$ being four-fold correlated is global; in general, the four points $x_a, -x_a, x_b, -x_b$ will be far away from each other on $\TT$. On the other hand, $(y_n)$ is Poissonian pair correlated. For $N$ even, we have
\begin{equation*}
    \#\lp\{1\ \leq m \neq n \leq N : |y_m - y_n| \leq \frac{s}{N}\rp\} = 2A_N + 2B_N
\end{equation*}
where 
\begin{equation*}
    A_N = \#\left\{1 \leq a \neq b \leq \frac{N}{2} : |x_a - x_b| \leq \frac{s}{N}\right\},\quad B_N = \#\left\{1 \leq a \neq b \leq \frac{N}{2}: |x_a + x_b| \leq \frac{s}{N}\right\}.
\end{equation*}
Just as a uniformly random and i.i.d sequence exhibits Poissonian pair correlation, we have
\begin{equation*}
    \lim_{N\to\infty} \frac{A_N}{N}  = s\ \text{ and }\ \lim_{N\to\infty} \frac{B_{N}}{N} = s\quad \text{a.s}
\end{equation*}
so it follows almost surely that
\begin{equation*}
    \lim_{N\to\infty} \frac{1}{N}\#\lp\{1\ \leq m \neq n \leq N\; :\; |y_m - y_n| \leq \frac{s}{N}\rp\} = \lim_{N\to\infty} \frac{A_N+B_N}{N} = 2s
\end{equation*}
and $(y_n)$ is almost surely Poissonian pair correlated.
\end{proof}
\end{example}

\begin{example}\label{additive-energy-example}
We exhibit another sequence having Poissonian pair correlation but not Poissonian correlation of four-fold differences. In \cite{ACL}, Aistleitner, Larcher, \& Lewko prove the following theorem relating Poissonian pair correlation to the additive energy of sequences.
\begin{theorem}[Aistleitner, Larcher, \& Lewko, 2017]\label{poissonian_pair_acl}
If $(a_n)$ is a strictly increasing sequence of integers such that
\begin{equation*}
    E(a_1,\ldots, a_N) = \mathcal{O}(N^{3-\varepsilon})
\end{equation*}
for some $\varepsilon > 0$, then $(\{a_n\alpha\})_{n\geq 1}$ has Poissonian pair correlation for almost all $\alpha$. 
\end{theorem}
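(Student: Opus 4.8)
The plan is to run the classical second-moment (variance) argument in the parameter $\alpha\in[0,1]$, with the additive energy hypothesis supplying the decisive variance bound. Fix $s>0$. First I would reduce to a smoothed count: choose even, compactly supported, piecewise-$C^1$ functions $\psi^{\pm}$ with $\psi^{-}\le\chi_{[-s,s]}\le\psi^{+}$, with $\int\psi^{\pm}=2s\pm\epsilon$, and with Fourier decay $|\widehat{\psi^{\pm}}(\xi)|\lesssim_s\min(1,\xi^{-2})$. Periodizing, $\Psi_N(x):=\sum_{j\in\ZZ}\psi\big(N(x+j)\big)$ has Fourier coefficients $\widehat{\Psi_N}(\ell)=N^{-1}\widehat{\psi}(\ell/N)$. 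Since for $x_n=\{a_n\alpha\}$ the torus distance between $x_m$ and $x_n$ equals $\|(a_m-a_n)\alpha\|$ (distance to the nearest integer), it suffices to prove that $R_N^{\psi}(\alpha):=\frac1N\sum_{1\le m\ne n\le N}\Psi_N\big((a_m-a_n)\alpha\big)$ converges to $\int\psi$ for almost every $\alpha$, for each such $\psi$; the sharp pair-correlation count is then squeezed between $R_N^{\psi^{-}}$ and $R_N^{\psi^{+}}$.

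Next I would compute the first two moments of $R_N^{\psi}$ over $\alpha\in[0,1]$. Because the $a_n$ are distinct integers, $a_m-a_n$ is a nonzero integer whenever $m\ne n$, so $\int_0^1\Psi_N\big((a_m-a_n)\alpha\big)\,d\alpha=\widehat{\Psi_N}(0)=N^{-1}\int\psi$, whence the first moment is $\int_0^1 R_N^{\psi}\,d\alpha=\tfrac{N-1}{N}\int\psi\to\int\psi$. For the variance, expand $\Var(R_N^{\psi})=N^{-2}\sum\mathrm{Cov}$ over pairs of ordered pairs $(m,n),(k,l)$; by orthogonality of the characters $e^{2\pi i\ell\alpha}$ on $[0,1]$, the covariance of $\Psi_N\big((a_m-a_n)\alpha\big)$ and $\Psi_N\big((a_k-a_l)\alpha\big)$ is $\sum\widehat{\Psi_N}(\ell)\overline{\widehat{\Psi_N}(\ell')}$ over $(\ell,\ell')\ne(0,0)$ with $\ell(a_m-a_n)+\ell'(a_k-a_l)=0$. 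The diagonal contribution, from quadruples with $a_m-a_n=\pm(a_k-a_l)$, involves $O\big(E(a_1,\dots,a_N)\big)$ quadruples, each contributing $\sum_{\ell\ne0}|\widehat{\Psi_N}(\ell)|^2\asymp_s N^{-1}$, so it is $\lesssim_s E(a_1,\dots,a_N)/N^3\lesssim_s N^{-\varepsilon}$ by hypothesis. For the remaining off-diagonal quadruples, writing $p=a_m-a_n$, $q=a_k-a_l$ with $p\ne\pm q$ and $d=\gcd(p,q)$, the character relation forces $\ell=(q/d)t,\ \ell'=-(p/d)t$ for $t\in\ZZ$, and the $\xi^{-2}$ decay gives $|\mathrm{Cov}|\lesssim_s d\,\big(N\max(|p|,|q|)\big)^{-1}$; summed against $r(p):=\#\{(m,n):a_m-a_n=p\}$ and combined with a divisor bound and the additive energy hypothesis, this off-diagonal part of $\Var(R_N^{\psi})$ is of lower order, $N^{-\varepsilon+o(1)}$. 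I expect this off-diagonal estimate — showing that quadruples sharing a nontrivial common gap factor but not equal up to sign genuinely contribute a lower-order term — to be the main technical obstacle; everything else is routine.

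With $\Var(R_N^{\psi})\lesssim_s N^{-\varepsilon+o(1)}$ established, I would upgrade to almost-sure convergence along a geometric subsequence. Fix $\eta>0$, set $N_j=\lceil(1+\eta)^j\rceil$; Chebyshev gives $\sum_j\mes\{\alpha:|R_{N_j}^{\psi}(\alpha)-\int\psi|>\delta\}\lesssim_\delta\sum_j N_j^{-\varepsilon+o(1)}<\infty$, so by the Borel--Cantelli lemma $R_{N_j}^{\psi}(\alpha)\to\int\psi$ for a.e.\ $\alpha$. For $N_j\le N<N_{j+1}$, monotonicity of the counting function in $N$ (and of $1/N$) traps $R_N^{\psi}(\alpha)$ between $\tfrac{N_j}{N_{j+1}}R_{N_j}^{\psi}(\alpha)$ and $\tfrac{N_{j+1}}{N_j}R_{N_{j+1}}^{\psi}(\alpha)$; since $N_{j+1}/N_j\to 1+\eta$, almost every $\alpha$ satisfies $(1+\eta)^{-1}\int\psi\le\liminf_N R_N^{\psi}(\alpha)\le\limsup_N R_N^{\psi}(\alpha)\le(1+\eta)\int\psi$. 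Intersecting over a sequence $\eta\to 0$ gives $R_N^{\psi}(\alpha)\to\int\psi$ for a.e.\ $\alpha$.

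Finally I would assemble the conclusion. Applying the previous step to $\psi^{\pm}$ for every $s$ in a fixed countable dense subset of $(0,\infty)$ (a countable intersection of full-measure sets), and then for arbitrary $s>0$ squeezing the sharp count between the smooth counts while letting $\epsilon\to 0$ together with the dense approximations — using monotonicity in $s$ and continuity of $s\mapsto 2s$ — we obtain that for almost every $\alpha$, $\tfrac1N\#\{1\le m\ne n\le N:\|(a_m-a_n)\alpha\|\le s/N\}\to 2s$ for all $s>0$. Hence $(\{a_n\alpha\})_{n\ge1}$ has Poissonian pair correlation for almost all $\alpha$, as claimed.
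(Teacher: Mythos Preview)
The paper does not prove this theorem at all: it is quoted from \cite{ACL} inside Example~\ref{additive-energy-example} solely to manufacture a sequence with pair correlation but not four-fold correlation, and no argument is given or sketched. So there is no ``paper's own proof'' to compare against.

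Your plan is the right one and matches the architecture of the original Aistleitner--Larcher--Lewko argument: smooth the indicator, compute the first two moments in $\alpha$, feed the additive-energy hypothesis into the variance, then Borel--Cantelli along a lacunary subsequence and interpolate. Two points deserve sharpening. First, the off-diagonal variance term is not dispatched by a ``divisor bound.'' After your reduction the quantity to control is essentially a GCD sum
\[
\sum_{p,q} r(p)\,r(q)\,\frac{\gcd(p,q)}{\max(|p|,|q|)} \;\le\; \sum_{p,q} r(p)\,r(q)\,\frac{\gcd(p,q)}{\sqrt{|p q|}},
\]
and the input that makes this $\le N^{o(1)}\sum_p r(p)^2 = N^{o(1)}E(a_1,\dots,a_N)$ is the G\'al/Bondarenko--Seip bound on GCD sums, which is precisely the technology \cite{ACL} invokes; a standard divisor estimate is not enough. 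Second, your interpolation paragraph is slightly glib: neither $R_N^{\psi}$ nor the sharp count is literally monotone in $N$, because the window $s/N$ shrinks as $N$ grows. The correct sandwich for $N_j\le N<N_{j+1}$ compares $C_N(s)$ with $C_{N_j}(sN_j/N_{j+1})$ from below and $C_{N_{j+1}}(sN_{j+1}/N_j)$ from above (adjusting $s$ as well as $N$), which then gives the $(1+\eta)^{\pm 2}$ squeeze once you have a.e.\ convergence along $N_j$ for a countable dense set of $s$. With these two fixes your outline is the standard, correct proof.
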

Here,
\begin{equation*}
    E(a_1,\ldots, a_N) := \#\{1 \leq m,n,k,l \leq N\; :\; a_m - a_n = a_k - a_l\}
\end{equation*}
is the additive energy, and $\{a_n\alpha\}$ is the fractional part. Let $(a_n)$ be a sequence with additive energy satisfying $E(a_1,\ldots, a_N) = \mathcal{O}(N^{3-\varepsilon})$ and $E(a_1,\ldots,a_N) \geq CN^2$ for some $C > 1$, all large enough $N$. For $N$ large enough, 
\begin{equation*}
    \frac{1}{N^2}\#\lp\{\substack{1 \leq m,n,k,l \leq N\\\{m,n\}\neq\{k,l\}}\; :\; |a_m\alpha + a_n\alpha - a_k\alpha - a_l\alpha| = 0\rp\} \geq C-1
\end{equation*}
so by considering $s < C - 1$ in Equation (\ref{eq_2kfold_poissonian}), we see the sequence $(\{a_n\alpha\})_{n=1}^{\infty}$ cannot exhibit Poissonian correlation of four-fold differences. Yet for almost all $\alpha$, $(\{a_n\alpha\})_{n=1}^{\infty}$ has Poissonian pair correlation, so for some irrational $\alpha$, $(\{a_n\alpha\})_{n=1}^{\infty}$ has Poissonian pair correlation but not four-fold Poissonian correlation. Notice that in this example, Equation (\ref{eq_2kfold_poissonian}) may still hold for $s$ large; yet if we enforce the stronger condition $\frac{1}{N^2}E(a_1, \ldots, a_N) \to \infty$, then
\begin{equation*}
    \frac{1}{N^2}\#\lp\{\substack{1 \leq m,n,k,l \leq N\\\{m,n\}\neq\{k,l\}}\; :\; |a_m\alpha + a_n\alpha - a_k\alpha - a_l\alpha| = 0\rp\} \to \infty
\end{equation*}
as well, and the sequence $(\{a_n\alpha\})_{n=1}^{\infty}$ will not exhibit four-fold correlation for \textit{any} value of $s$, and any $\alpha$. 
\end{example}

\begin{example}\label{fourfold_not_pair}
We present a random sequence which almost surely exhibits Poissonian correlation of four-fold differences, but not Poissonian pair correlation. Let $(y_n)$ be the sequence given by
\begin{equation*}
    y_{n} = x_n + p_n
\end{equation*}
where $x_1, x_3, \ldots$ are chosen uniformly and independently at random on $\TT$, $x_{2n-1} = x_{2n}$ for all $k \geq 1$, and $p_n$ is chosen uniformly and independently at random on the inverval $[n^{-3/2}, n^{3/2}]$. This sequence is chosen so that $\frac{x_{2n-1} + x_{2n}}{2}$ is uniformly random in $\TT$, but $\frac{x_{2n-1} - x_{2n}}{2}$ lies at scale $n^{-3/2}$. 
We show that $(y_n)$ almost surely does not have Poissonian pair correlation, but does have Poissonian correlation of four-fold differences.
\begin{proof}
First, $(y_n)$ satisfies
\begin{align*}
    \#\left\{1 \leq m \neq n \leq N : |y_m - y_n| \leq \frac{s}{N}\right\} &\geq \# \left\{1 \leq n \leq N/2 : |p_{2n-1} - p_{2n}|  \leq \frac{s}{N}\right\}
\end{align*}
and
\begin{align*}
    \limsup_{N\to \infty} \frac{1}{N}\# \left\{1 \leq n \leq N/2 : |p_{2n-1} - p_{2n}|  \leq \frac{s}{N}\right\} \geq \frac{1}{8} \quad \text{almost surely}
\end{align*}
because $|p_n| \leq n^{-3/2}$. Taking $s$ sufficiently small in (\ref{eq:pair_correlation}), we see $(y_n)$ almost surely does not have Pair correlation. On the other hand, $(y_n)$ almost surely does exhibit Poissonian correlation of four-fold differences. The idea is that the scale of Poissonian pair correlation is $s/N$ whereas the scale of four-fold correlation is $s/N^2$; adding perturbations of scale $N^{-3/2}$ destroys Poissonian pair correlation but maintains four-fold correlation. Let
\begin{align*}
    A_N &= \#\Big\{\substack{1\leq m,n,k,l\leq N\\\{m,n\}\neq\{k,l\}} : |y_m+y_n-y_k-y_l| \leq \frac{s}{N^2}, x_m+x_n-x_k-x_l \neq 0\Big\}, \\ 
    B_N &= \#\Big\{\substack{1\leq m,n,k,l\leq N\\\{m,n\}\neq\{k,l\}} : |y_m+y_n-y_k-y_l| \leq \frac{s}{N^2}, x_m+x_n-x_k-x_l = 0\Big\},
\end{align*}
where we suppress the implicit dependence of $A_N$, $B_N$ on $s$. 
We would like to show that
\begin{equation}\label{eq:AB_poissonian}
    \lim_{N\to \infty} \frac{A_N+B_N}{N^2} = 2s \quad \text{almost surely}
\end{equation}
for all $s > 0$. 
\begin{claim*}
For all $s > 0$, we have
\begin{align*}
    \lim_{N\to \infty} \frac{A_N}{N^2} &= 2s \quad \text{almost surely}, \\ 
    \lim_{N\to \infty} \frac{B_N}{N^2} &= 0 \quad \text{almost surely}.
\end{align*}
\end{claim*}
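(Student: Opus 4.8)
The plan is a second-moment argument for $A_N$ and a first-moment argument for $B_N$, turned into almost-sure statements by Borel--Cantelli along a slowly growing subsequence together with a sandwiching argument. Write $\beta(j)=\lceil j/2\rceil$, so $x_j=z_{\beta(j)}$ for an i.i.d.\ uniform family $(z_a)_{a\geq1}$ on $\TT$, with the perturbations $p_j\sim\mathrm{Unif}[-j^{-3/2},j^{-3/2}]$ independent of everything. Since the $z_a$ are continuous, almost surely $x_m+x_n-x_k-x_l=0$ exactly when $\{\beta(m),\beta(n)\}=\{\beta(k),\beta(l)\}$ as multisets; call the set of blocks appearing with nonzero net coefficient in $z_{\beta(m)}+z_{\beta(n)}-z_{\beta(k)}-z_{\beta(l)}$ the \emph{effective support} of the quadruple $q=(m,n,k,l)$, so the quadruples counted by $A_N$ are precisely those with nonempty effective support, of which there are $N^4-O(N^2)$.

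For $A_N$, fix $q$ with nonempty effective support. Then $z_{\beta(m)}+z_{\beta(n)}-z_{\beta(k)}-z_{\beta(l)}$ is a nontrivial $\ZZ$-combination of i.i.d.\ uniforms, hence uniform on $\TT$, and stays uniform after conditioning on all the $p_j$; thus $\PP(|y_m+y_n-y_k-y_l|\le s/N^2)=2s/N^2$ exactly once $N$ is large, giving $\EE[A_N]=2sN^2+O(1)$. For the variance I would check that $\mathrm{Cov}(\mathbf 1_q,\mathbf 1_{q'})=0$ unless $q$ and $q'$ have the same effective support: conditioning on the $z_a$ for $a$ in the common part of the two supports and on all perturbations makes the events independent, and a quadruple with a block outside that common part still has conditional probability exactly $2s/N^2$. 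A short count shows that for fixed $q$ the number of $q'$ with the same effective support is $O(1)$ when that support has size $3$ or $4$ and $O(N)$ when it has size $2$ (the $O(N)$ being the free choice of a ``balanced'' extra block), and each covariance is $\leq 2s/N^2$, so $\Var(A_N)=O(N^3)$. Chebyshev gives $\PP(|A_N-\EE A_N|>\varepsilon N^2)=O(N^{-1})$; summing along $N_j=j^3$ and applying Borel--Cantelli gives $A_{N_j}/N_j^2\to2s$ a.s., and since $A_N(s)$ is nondecreasing in $s$ and $N_{j+1}/N_j\to1$, a sandwich argument upgrades this to $A_N/N^2\to2s$ a.s.\ for every $s>0$.

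For $B_N$ we have $y_m+y_n-y_k-y_l=p_m+p_n-p_k-p_l$, and I claim $\EE[B_N]=O(N^{3/2})$. Classifying the $\Theta(N^2)$ empty-support quadruples by which of the (at most two) blocks coincide, in every case $p_m+p_n-p_k-p_l$ is a $\pm$-combination of differences $p_{2a-1}-p_{2a}$, each of which has density $O(a^{3/2})$; a convolution bound then gives $\PP(|p_m+p_n-p_k-p_l|\le s/N^2)=O\lp(\min(\text{block indices})^{3/2}\,s/N^2\rp)$, and summing the $O(1)$ arrangements over pairs of blocks $a\le b\le N/2$ yields $\EE[B_N]\lesssim\frac{s}{N^2}\sum_{a\le b\le N/2}a^{3/2}=O(sN^{3/2})$. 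Markov along $N_j=j^3$, Borel--Cantelli, monotonicity of $B_N(s)$ in $s$, and the same sandwiching give $B_N/N^2\to0$ a.s.\ for every $s>0$; with the $A_N$ asymptotics this is the Claim.

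The step I expect to be hardest is the variance bound for $A_N$: one must pin down exactly when two quadruples have correlated indicators --- the ``equal effective support'' criterion, obtained by a careful choice of what to condition on --- and then carry out the combinatorics showing the worst pairs, those sharing an effective support of size $2$, number only $O(N^5)$, which is what keeps $\Var(A_N)$ at $O(N^3)$ rather than $O(N^4)$, below which Chebyshev would be useless.
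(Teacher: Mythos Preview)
Your approach matches the paper's: a second-moment argument for $A_N$ (which the paper abbreviates to ``the same argument'' as for an i.i.d.\ uniform sequence) and a first-moment bound $\EE[B_N]=O(sN^{3/2})$ followed by Markov for $B_N$. Your treatment is in fact more complete than the paper's terse sketch, since you make the almost-sure convergence rigorous via Borel--Cantelli along a polynomial subsequence together with monotonicity in $s$ and sandwiching; one small slip is that in your $B_N$ density bound the relevant block index is the one that \emph{survives} in $p_m+p_n-p_k-p_l$ rather than the minimum (e.g.\ when $m=k$ only the $b$-block contributes), but this does not affect the final $O(sN^{3/2})$ estimate.
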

\begin{proof}
The first line follows from the same argument that a uniformly random and i.i.d sequence almost surely exhibits Poissonian pair correlation. For the second statement, notice that if $x_m + x_n - x_k - x_l = 0$, then 
\begin{equation*}
    y_m + y_n - y_k - y_l = p_m + p_n - p_k - p_l
\end{equation*}
and for $m,n,k,l < N$, 
\begin{align*}
    \PP\left(|p_m + p_n - p_k - p_l| < \frac{s}{N^2}\right) &\leq \PP\left(N^{-3/2}|p_m m^{3/2} + p_m n^{3/2} - p_k k^{3/2} - p_l l^{3/2}| < sN^{-2}\right) \\ 
    &\leq \PP\left(|p_m m^{3/2} + p_m n^{3/2} - p_k k^{3/2} - p_l l^{3/2}| < sN^{-1/2}\right) \\ 
    &\leq 2sN^{-1/2}
\end{align*}
as $p_m m^{3/2}$ and other such variables are independently and uniformly distributed in $\TT$. Now by linearity of expectation,
\begin{equation*}
    \EE[B_N] \leq 2sN^{-1/2} \#\Big\{\substack{1\leq m,n,k,l\leq N\\\{m,n\}\neq\{k,l\}} : x_m+x_n-x_k-x_l = 0\Big\}.
\end{equation*}
Almost surely, we can only have $x_m+x_n-x_k-x_l = 0$ if $k, l \in \{m \pm 1, n \pm 1\}$, and there are at most $16N^2$ such choices of $m,n,k,l$. So $\EE[B_N] \leq 32sN^{3/2}$, and by Markov's inequality, it follows that 
\begin{equation*}
    \lim_{N\to \infty} \frac{B_N}{N^2} = 0 \quad \text{almost surely}
\end{equation*}
as desired.
\end{proof}

Now that the Claim has been proven, Equation (\ref{eq:AB_poissonian}) follows immediately, and we see that $(y_n)$ almost surely exhibits Poissonian correlation of four-fold differences. 
\end{proof}
\end{example}

\begin{example}\label{fourfold_not_pair_rate1}
The sequence in Example \ref{fourfold_not_pair} exploited the difference in scaling between Poissonian pair correlation and four-fold correlation; pair correlation takes place at scale $1/N$, and four-fold correlation takes place at scale $1/N^2$. We may instead compare pair correlation to four-fold correlation at rate $\alpha = 1$, as here the two notions take place at the same scale. Recall that a sequence has four-fold correlation at rate $\alpha = 1$ if for all $s > 0$,
\begin{equation*}
    \lim_{N\to\infty} \frac{1}{N^3} \#\lp\{\substack{1 \leq m,n,k,l \leq N\\\{m,n\} \neq \{k,l\}}\; :\; |x_m+x_n-x_k-x_l| \leq \frac{s}{N}\rp\} = 2s.
\end{equation*}
Let $(y_n)$ be the sequence given by
\begin{equation*}
    (y_n) = (x_1, x_1, x_2, x_2, \ldots).
\end{equation*}
where $x_n$ is chosen uniformly and independently at random on $\TT$. Then $(y_n)$ does not have Poissonian pair correlation, nor does it have four-fold Poissonian correlation at rate $\alpha = 2$, but it does have four-fold Poissonian correlation at rate $\alpha = 1$. 
\begin{proof}
First,
\begin{equation*}
    \frac{1}{N} \#\{1 \leq m \neq n \leq N\; :\; |y_m - y_n| = 0\} \geq \frac{1}{2}
\end{equation*}
so $(y_n)$ is not Poissonian pair correlated. Also, $(y_n)$ does not have Poissonian correlation of four-fold differences at rate $\alpha = 2$ because
\begin{equation*}
    \frac{1}{N^2} \#\lp\{\substack{1 \leq m,n,k,l \leq N\\\{m,n\}\neq \{k,l\}}\; :\; |y_m+y_n-y_k-y_l| = 0\rp\} \geq \frac{1}{2}.
\end{equation*} 
However, $(y_n)$ does have Poissonian correlation of four-fold differences at rate $\alpha = 1$. Notice that each random variable $y_m+y_n-y_k-y_l$ is either uniformly distributed or equal to 0. Let 
\begin{align*}
    A_N &= \#\lp\{\substack{1\leq m,n,k,l\leq N\\\{m,n\}\neq\{k,l\}}\; :\; 0 < |y_m+y_n-y_k-y_l| < \frac{s}{N}\rp\}, \\
    B_N &= \#\lp\{\substack{1\leq m,n,k,l\leq N\\\{m,n\}\neq\{k,l\}}\; :\; |y_m+y_n-y_k-y_l| = 0\rp\}.
\end{align*}
As we demonstrated in Example \ref{fourfold_not_pair}, $B_N \leq 16N^2$ almost surely, so 
\begin{equation*}
    \lim_{N\to \infty} \frac{B_N}{N^3} = 0 \quad \text{almost surely.}
\end{equation*}
Also, the proof that a sequence chosen uniformly and independently at random is Poissonian pair correlated extends to show
\begin{equation*}
    \lim_{N\to\infty} \frac{A_N}{N^3} = 2s\quad \text{almost surely.}
\end{equation*}
So we have 
\begin{equation*}
    \lim_{N\to\infty} \frac{1}{N^3} \#\lp\{\substack{1 \leq m,n,k,l \leq N\\\{m,n\}\neq\{k,l\}}\; :\; |x_m+x_n-x_k-x_l| \leq \frac{s}{N}\rp\} = \lim_{N\to\infty} \frac{A_N}{N^3} + \frac{B_N}{N^3} = 2s
\end{equation*}
almost surely, and we are done.
\end{proof}
\end{example}

\section{Conclusion}
A main difficulty in studying Poissonian pair correlation is a lack of examples. Theorem \ref{kway} provides a countable collection of conditions each of which are stronger than equidistribution and similar to Poissonian pair correlation. It is plausible that it will be easier to find a sequence that exhibits Poissonian correlation of $2k$-fold differences for $k>1$ than it is to find a sequence exhibiting Poissonian pair correlation. The motivation is that in taking higher order differences we destroy more information from the original sequence, and are left with a sequence that is more random. 
\begin{question}
Do any explicit and common sequences exhibit Poissonian correlation of four-fold differences, but not Poissonian pair correlation?
\end{question}

We may also ask a more quantitative version of the same idea. We say that an increasing sequence $(a_n)$ of positive integers has the \textit{metric pair correlation property} if for almost all $\alpha$, $(\{a_n\alpha\})$ has Poissonian pair correlation. In a similar way, we say that $(a_n)$ has the \textit{metric $2k$-fold correlation property} if for almost all $\alpha$, $(\{a_n\alpha\})$ has Poissonian correlation of $2k$-fold differences. Quantitatively, one can compare the size of the exceptional sets by their Hausdorff dimension. The Hausdorff dimension of the set of $\alpha$ for which $(\{a_n\alpha\})$ is not Poissonian pair correlated has been intensely studied, for instance in \cite{ACL}. It would be interesting to compare the Hausdorff dimension of the exceptional set for the Poissonian pair correlation property and the $2k$-fold correlation property. In particular, one might expect that because taking higher order correlations destroys more information, the Hausdorff dimension of the exceptional set decreases for larger values of $k$.
\begin{question}
Let $(a_n)$ a sequence of integers having the metric $2k$-fold correlation property and the metric $2m$-fold correlation property, with $k < m$. Let 
\begin{equation*}
    A = \{\alpha : (\{a_n\alpha\}) \text{ not $2k$-fold correlated}\}, B = \{\alpha : (\{a_n\alpha\}) \text{ not $2m$-fold correlated}\}.
\end{equation*}
Will the Hausdorff dimension of $A$ be greater than that of $B$?
\end{question}
Example \ref{additive-energy-example} shows that in order for this question to be interesting, the sequence $(a_n)$ cannot have additive energy (or higher order analogues) too large--otherwise, it won't exhibit Poissonian correlation of $2k$-fold differences for any $\alpha$. 
A positive answer to this question would strengthen our notion that Poissonian correlation of higher order differences is a weaker property than Poissonian correlation of lower order differences. 
Other approaches to comparing the relative strength of Poissonian pair correlation and Poissonian correlation of higher order differences would be interesting routes for further study. 

\section{Acknowledgements}
Many thanks to Stefan Steinerberger for proposing the question and for valuable discussions during the writing of this paper.\\\\
Declerations of interest: none


\begin{thebibliography}{9999}  
\bibitem{ALP} C. Aistleitner, T. Lachmann, and F. Pausinger, Pair correlations and equidistribution, Journal of Number Theory, vol. 182, 206–220, (2018).
\bibitem{ACL} C. Aistleitner, G. Larcher, and M. Lewko, Additive energy and the Hausdorff dimension of the exceptional set in metric pair correlation problems. Israel Journal of Mathematics 222.1 (2017): 463-485.
\bibitem{BZ} Boca and A. Zaharescu, Pair correlation of values of rational functions $\mod p$. Duke Math.
J. 105 (2000), no. 2, 267–307
\bibitem{C} J.W.S Cassels, A New Inequality with Application to the Theory of Diophantine Approximation, Mathematische Annalen vol 126.1, 108-118, (1953).
\bibitem{CV} J.W.S Cassels, A theorem of Vinogradoff on uniform distribution, Mathematical Proceedings of the Cambridge Philosophical Society, 46(4), 642-644, (1950).
\bibitem{EMV} D. El-Baz, J. Marklof and I. Vinogradov, The two-point correlation function of the fractional part of $\sqrt{n}$ is Poisson, Proceedings of the American Mathematical Society 143 (7), 2815–2828,
(2015).
\bibitem{ET1} P. Erd\"os, P. Turan, On a problem in the theory of uniform distribution. I. Nederl. Akad. Wetensch., Proc. 51,
(1948) 1146–1154, also: Indagationes Math. 10, 370–378 (1948).
\bibitem{ET2} P. Erd\"os, P. Turan, On a problem in the theory of uniform distribution. II. Nederl. Akad. Wetensch., Proc. 51,
(1948) 1262–1269, also: Indagationes Math. 10, 406–413 (1948).
\bibitem{GL} S. Grepstad and G. Larcher, On Pair Correlation and Discrepancy, Arch. Math. (Basel) 109 (2017), no. 2, 143–149.
\bibitem{H} D. R. Heath-Brown, Pair correlation for fractional parts of $\alpha n^2$, Math. Proc. Cambridge Philos. Soc. 148 (2010), 385–407.
\bibitem{K} J.F Koksma, \"Uber die Diskrepanz (mod 1) and die ganzzahligen L\"osungen gewisser Ungleichungen, Indagationes Math. 3, 36-41 (1941).
\bibitem{M} J. Marklof, Pair correlation and equidistribution on manifolds, Monatshefte f\"ur Mathematik 191.2 (2020), 279-294.
\bibitem{NP} R. Nair and M. Pollicott, Pair correlations of sequences in higher dimensions. Israel J. Math. 157 (2007), 219–238.
\bibitem{RSZ} Z. Rudnick, P. Sarnak, A. Zaharescu, The distribution of spacings between the fractional parts of $ n^2\alpha$, Inventiones Mathematicae 145, 37-5 (2001). 
\bibitem{SSDisc} S. Steinerberger, Poissonian Pair Correlation and Discrepancy, Indagationes Math. 29, 1167–-1178 (2018).
\bibitem{SSHighDim} S. Steinerberger, Poissonian Pair Correlation in Higher Dimensions, \href{https://arxiv.org/pdf/1812.10458.pdf}{arXiv:1812.10458}, (2018).
\bibitem{SSQuant} S. Steinerberger, Localized Quantitative Criteria for Equidistribution, Acta Arithmetica, 180 , 183–199 (2017).
\bibitem{VP} J.G Van der Corput, C. Pisot, Sur la discr\'epance modulo un, Indagationes Math., 1,
143-153, 184-195, 260-269 (1939).
\bibitem{W} A. Walker, The primes are not metric Poissonian, Mathematika 64, 230–236, (2018).
\bibitem{Weyl} H. Weyl, \"Uber die Gleichverteilung von Zahlen mod. Eins., Math. Ann., 77 (1915-16), 313-52
\bibitem{V} I.M. Vinogradoff, On the fractional parts of integral polynomials (Russian), Izu. Akad.
Nauk SSSR 20, 585-600 (1926)
\end{thebibliography}
\end{document}